\documentclass[10pt]{amsart}
\usepackage{amssymb}
\textwidth 15.1cm \textheight 21.08cm \topmargin 0.0cm
\oddsidemargin 0.0cm \evensidemargin 0.0cm
\parskip -0.0cm

\newtheorem{define}{Definition}

\newtheorem{problem}{Problem}
\newtheorem{lemma}{Lemma}
\newtheorem{coro}{Corollary}
\newtheorem{theorem}{Theorem}
\newtheorem{remark}{Remark}


\begin{document}

\title [Functional inequalities for Fox-Wright functions ]{Functional inequalities for Fox-Wright functions \\}%

\author[ K. Mehrez, S. M. Sitnik]{KHALED MEHREZ and Sergei M. Sitnik}
\address{Khaled Mehrez. D\'epartement de Math\'ematiques ISSAT Kasserine, Universit\'e de Kairouan, Tunisia.}
\address{ D\'epartement de Math\'ematiques, Facult\'e de Sciences de Tunis, Universit\'e Tunis-El Manar, Tunisia.}
 \email{k.mehrez@yahoo.fr}
\address{Sergei M. Sitnik, Voronezh Institute of the Russian Ministry of Internal Affairs, Voronezh, Russia.}
\email{pochtaname@gmail.com}
\begin{abstract}
 In this paper, our aim is  to show some mean value inequalities for
the Fox-Wright functions, such as Tur\'an--type inequalities, Lazarevi\'c and Wilker--type inequalities. As applications we derive some new type inequalities for hypergeometric functions and the four--parametric Mittag--Leffler functions.  Furthermore, we prove monotonicity of ratios for sections of series of Fox-Wright functions, the results is also closely connected with Tur\'an--type inequalities. Moreover, some other type inequalities are also presented. At the end of the paper, some problems stated, which may be of interest for further research.
\end{abstract}
\maketitle
\noindent{\textbf{ Keywords:}} Fox-Wright functions,  Hypergeometric functions, Four--parametric Mittag--Leffler functions, Tur\'an type inequalities, Lazarevi\'c and Wilker--type inequalities.\\

\noindent \textbf{Mathematics Subject Classification (2010)}: 33C20; 33E12; 26D07.
\section{Introduction}
In a series of recent papers,  the authors have studied certain functional inequalities and geometric properties for a some  special functions, for example, the classical
Gauss and Kummer hypergeometric functions, as well the generalized hypergeometric
functions \cite{KhSi3}, classical and generalized Mittag-Leffler functions \cite{KhSi1, KhSi2} and the Wright function \cite{Khaled1}.  Here, in our present investigation,  we generalize some these results to the Fox-Wright function ${}_p\Psi_q.$ 

Here, and in what follows, we use  ${}_p\Psi_q$ to denote the Fox-Wright  generalization of the familiar hypergeometric  ${}_p F_q$  function with $p$ numerator and $q$ denominator parameters (see \cite{M}), defined by (cf., e.g.,\cite[p. 4, Eq. (2.4)]{SS},) 
\begin{equation}\label{11}
{}_p\Psi_q\Big[_{(\beta_1,B_1),...,(\beta_q,B_q)}^{(\alpha_1,A_1),...,(\alpha_p,A_p)}\Big|z \Big]={}_p\Psi_q\Big[_{(\beta_q,B_q)}^{(\alpha_p,A_p)}\Big|z \Big]=\sum_{k=0}^\infty\frac{\prod_{l=1}^p\Gamma(\alpha_l+kA_l)}{\prod_{j=1}^q\Gamma(\beta_l+kB_l)}\frac{z^k}{k!},
\end{equation}
where $A_l\geq0,\; l=1,...,p; B_j\geq0,$ and $l=1,...,q.$   The series (\ref{11}) converges absolutely and uniformly for all bounded $|z|,\;z\in\mathbb{C}$ when
$$\epsilon=1+\sum_{l=1}^qB_l-\sum_{l=1}^pA_l>0.$$ 
The generalized hypergeometric function ${}_p F_q$ id defined by
\begin{equation}\label{12}
{}_p F_q\left[^{\alpha_1,...,\alpha_p}_{\beta_1,...,\beta_q}\Big|z\right]=\sum_{k=0}^\infty\frac{\prod_{l=1}^p(\alpha_l)_k}{\prod_{l=1}^q(\beta_l)_k}\frac{z^k}{k!}
\end{equation}
where, as usual, we make use of the following notation:
$$(\tau)_0=1, \textrm{and}\;\;(\tau)_k=\tau(\tau+1)...(\tau+k-1)=\frac{\Gamma(\tau+k)}{\Gamma(\tau)},\;k\in\mathbb{N},$$
to denote the shifted factorial or the Pochhammer symbol. Obviously, we find from the definitions (\ref{11}) and (\ref{12}) that 
\begin{equation}\label{13}
{}_p\Psi_q\Big[_{(\beta_1,1),...,(\beta_q,1)}^{(\alpha_1,1),...,(\alpha_p,1)}\Big|z \Big]=\frac{\Gamma(\alpha_1)...\Gamma(\alpha_p)}{\Gamma(\beta_1)...\Gamma(\beta_q)}{}_p F_q\left[^{\alpha_1,...,\alpha_p}_{\beta_1,...,\beta_q}\Big|z\right]
\end{equation}
We define the normalized Fox-Wright function  ${}_p\Psi_q^*$ by  
\begin{equation}\label{12}
{}_p\Psi_q^*\Big[_{(\beta_1,B_1),...,(\beta_q,B_q)}^{(\alpha_1,A_1),...,(\alpha_p,A_p)}\Big|z \Big]=\frac{\prod_{i=1}^q\Gamma(\beta_i)}{\prod_{i=1}^p\Gamma(\alpha_i}\sum_{k=0}^\infty\frac{\prod_{l=1}^p\Gamma(\alpha_l+kA_l)}{\prod_{l=1}^q\Gamma(\beta_l+kB_l)}\frac{z^k}{k!}.
\end{equation}

The Mittag--Leffler functions with $2n$  parameters are defined for $B_j\in\mathbb{R}\;\;(B_1^2+...+B_n^2\neq0)$ and $\beta_j\in\mathbb{C}\;\;(j=1,...,n\in\mathbb{N}),$ by the series
\begin{equation}\label{197}
E_{(B,\beta)_n}(z)=\sum_{k=0}^\infty\frac{z^k}{\prod_{j=1}^n\Gamma(\beta_j+kB_j)},\;z\in\mathbb{C}.
\end{equation}
When $n = 1$, the definition in (\ref{197})  coincides with the definition of the two--parametric Mittag--Leffler function
\begin{equation}\label{198}
E_{(B,\beta)_1}(z)=E_{B, \beta}(z)=\sum_{k=0}^\infty\frac{z^k}{\Gamma(\beta+kB)},\;z\in\mathbb{C},
\end{equation}
and and similarly for $n = 2$, where $E_{(B, \beta)_2}(z)$  coincides with the four--parametric Mittag--Leffler function
\begin{equation}\label{199}
E_{(B,\beta)_2}(z)=E_{B_1, \beta_1; B_2,\beta_2}(z)=\sum_{k=0}^\infty\frac{z^k}{\Gamma(\beta_1+kB_1)\Gamma(\beta_2+kB_2)},\;z\in\mathbb{C},
\end{equation}
is closer by its properties to the Wright function $W_{B,\beta}(z)$ defined by
\begin{equation}
W_{B,\beta}(z)=\sum_{k=0}^\infty\frac{z^k}{k!\Gamma(\beta_1+kB_1))},\;z\in\mathbb{C}.
\end{equation}
The generalized $2n-$parametric Mittag-Leffler function $E_{(\beta,B)_n}(z)$ can be represented in terms of the  Fox--Wright function ${}_p\Psi_q(z)$ by
\begin{equation}\label{199}
E_{(B,\beta)_n}(z)=E_{B_1,\beta_1;...;B_n,\beta_n}(z)={}_1\Psi_n\Big[_{(\beta_1,B_1),...,(\beta_n,B_n)}^{\;\;\;\;\;\;\;\;(1,1)\;\;\;\;}\Big|z \Big],\;z\in\mathbb{C}.
\end{equation}
\\
Throughout this paper, we adopt the following convention:
$$\alpha=(\alpha_1,...,\alpha_p),\;\beta=(\beta_1,...,\beta_q),\;A=(A_1,...,A_p),\;B=(B_1,...,B_q)$$ 
and
$${}_p\Psi_q\Big[_{(\beta_q,B_q)}^{(\alpha_p,A_p)}\Big|z \Big]={}_p\Psi_q\Big[_{(\beta_1, B_1),(\beta_{q-1},B_{q-1})}^{\;\;\;\;(\alpha_p,A_p)}\Big|z \Big]={}_p\Psi_q\Big[_{\;\;\;\;(\beta_q,B_q)}^{(\alpha_1, A_1),(\alpha_{p-1},A_{p-1})}\Big|z \Big].$$

\vspace{0,7cm}
The present sequel to some of the aforementioned investigations is organized as follows.
In Section 2, we state some useful lemmas which will be needed in the proofs of our results. In section 3, we present some Tur\'an type inequalities for the Fox--Wright functions ${}_p\Psi_q(z).$ As a consequence, we deduce the Tur\'an type inequalities for the hypergeometric functions ${}_pF_q(z)$ and for the  $2n-$parametric  Mittag--Leffler functions $E_{(B,\beta)_n}(z).$  Moreover, we prove monotonicity of ratios for sections of series of the Fox--Wright functions, the result is also closely connected with Tur\'an--type inequalities. In section 4, we give the Lazarevi\'c and Wilker type inequalities for the Fox--Wright function ${}_1\Psi_2(z).$ As applications, we derive the Lazarevi\'c and Wilker type inequalities for the for the hypergeometric functions ${}_1F_{2}(z)$ and for the four--parametric Mittag--Leffler functions $E_{B_1,\beta_1; 1\beta_2}(z).$ In section 5,  we present some other inequalities for the Fox--Wright function ${}_p\Psi_{p+1}(z).$  Finally, in Section 6, we pose two open problems, which may be interest for further research.\\

Each of the following definitions will be used in our investigation.\\

\begin{define} A function $f:[a,b]\subseteq\mathbb{R}\rightarrow\mathbb{R}$  is said to be log-convex if its natural logarithm $\log f$ is convex, that is, for all $x,y\in[a,b]$ and $\alpha\in[0,1]$ we have
$$f(\alpha x+(1-\alpha)y)\leq[f(x)]^\alpha[f(y)]^{1-\alpha}.$$
If the above inequality is reversed then $f$ is called a log-concave function. It is also known that if $g$ is differentiable, then $f$ is log-convex (log-concave) if and only if $f^\prime/f$ is increasing (decreasing). 
\end{define}

\section{Preliminary Lemmas}

In the proof of the main result we will need the following  lemmas.

\begin{lemma}\label{l1}
Let $(a_{n})$ and $(b_{n})$ $(n=0,1,2...)$ be real numbers, such that $b_{n}>0,\;n=0,1,2,...$ and $\left(\frac{a_{n}}{b_{n}}\right)_{n\geq 0}$ is increasing (decreasing), then $\left(\frac{a_{0+}...+a_{n}}{b_{0}+...+b_{n}}\right)_{n}$ is also increasing (decreasing).
\end{lemma}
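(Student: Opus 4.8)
The plan is to prove this classical mediant-type monotonicity result by induction on $n$, reducing at each step to a two-term comparison. The key observation is that for positive denominators, the ratio $\frac{a_0+\cdots+a_n}{b_0+\cdots+b_n}$ lies between the previous ratio $\frac{a_0+\cdots+a_{n-1}}{b_0+\cdots+b_{n-1}}$ and the new term $\frac{a_n}{b_n}$, because a mediant of two fractions with positive denominators always lies between them: if $\frac{p}{r}\le\frac{s}{t}$ with $r,t>0$ then $\frac{p}{r}\le\frac{p+s}{r+t}\le\frac{s}{t}$.

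First I would set $A_n=a_0+\cdots+a_n$ and $B_n=b_0+\cdots+b_n$, so $B_n>0$ for all $n$, and it suffices to show $\frac{A_{n-1}}{B_{n-1}}\le\frac{A_n}{B_n}$ for each $n\ge1$ (in the increasing case). Since $A_n=A_{n-1}+a_n$ and $B_n=B_{n-1}+b_n$ with $b_n>0$, the inequality $\frac{A_{n-1}}{B_{n-1}}\le\frac{A_{n-1}+a_n}{B_{n-1}+b_n}$ is equivalent, after clearing the positive denominators, to $A_{n-1}b_n\le a_n B_{n-1}$, i.e. to $\frac{A_{n-1}}{B_{n-1}}\le\frac{a_n}{b_n}$. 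Thus the whole claim follows once I establish that $\frac{A_{n-1}}{B_{n-1}}\le\frac{a_n}{b_n}$ for every $n\ge1$.

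To get that last inequality I would use the hypothesis that $\left(\frac{a_k}{b_k}\right)$ is increasing: for each $k\le n-1$ we have $\frac{a_k}{b_k}\le\frac{a_n}{b_n}$, hence $a_k\le\frac{a_n}{b_n}b_k$ since $b_k>0$. Summing over $k=0,\dots,n-1$ gives $A_{n-1}\le\frac{a_n}{b_n}B_{n-1}$, which is exactly $\frac{A_{n-1}}{B_{n-1}}\le\frac{a_n}{b_n}$ after dividing by $B_{n-1}>0$. The decreasing case is entirely analogous, with all inequalities reversed (or one may apply the increasing case to $(-a_n)$). The argument is short and the only mild subtlety — the main thing to be careful about — is keeping track of signs when clearing denominators, since the $a_n$ need not be positive; this is handled cleanly by always multiplying through by the positive quantities $b_k$, $B_{n-1}$, $B_n$ rather than manipulating the fractions directly.
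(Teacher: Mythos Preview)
Your proof is correct: the reduction to showing $\frac{A_{n-1}}{B_{n-1}}\le\frac{a_n}{b_n}$ via the mediant inequality, followed by summing $a_k\le\frac{a_n}{b_n}b_k$ over $k\le n-1$, is clean and handles the sign issue properly by only dividing or multiplying by the positive quantities $b_k$, $B_{n-1}$. The paper itself does not prove this lemma; it is stated in the preliminaries as a known auxiliary result without proof or reference, so there is no argument in the paper to compare your approach against.
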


The second lemma is about the monotonicity of two power series, see \cite{PV} for more details.

\begin{lemma}\label{l2}
Let $(a_{n})$ and $(b_{n})$ $(n=0,1,2...)$ be real numbers and
let the power series $A(x)=\sum_{n=0}^{\infty}a_{n}x^{n}$ and $B(x)=\sum_{n=0}^{\infty}b_{n}x^{n}$
be convergent for $|x|<r$. If $b_{n}>0,\, n=0,1,2,...$ and  the
sequence $\left(\frac{a_{n}}{b_{n}}\right)_{n\geq0}$is (strictly)
increasing (decreasing) , then the function $\frac{A(x)}{B(x)}$ is also
(strictly) increasing on $[0,r)$.
\end{lemma}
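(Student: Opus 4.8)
\emph{Proof plan.} The plan is to deduce the monotonicity of $f(x):=A(x)/B(x)$ from the sign of its derivative, reducing the statement to a positivity property of a double power series. First I would note that, since $b_n>0$ for every $n$, the series $B(x)=\sum_{n\ge0}b_nx^n$ has only nonnegative terms and $B(0)=b_0>0$, so $B(x)>0$ on $[0,r)$; hence $f$ is well defined and differentiable there, with
\[
f'(x)=\frac{A'(x)B(x)-A(x)B'(x)}{B(x)^2},
\]
and since $B(x)^2>0$, the function $f$ is increasing (resp. decreasing) on $[0,r)$ exactly when $W(x):=A'(x)B(x)-A(x)B'(x)\ge0$ (resp. $\le0$) there.

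The key step is to expand $W$ as a double series. Both $A$ and $B$ converge absolutely on $|x|<r$, hence so do the differentiated series $A'$, $B'$, and the Cauchy products may be rearranged at will. A term-by-term multiplication gives
\[
W(x)=\sum_{i,j\ge0}(i-j)\,a_i b_j\,x^{\,i+j-1}
\]
(the would-be singular $(0,0)$-term has coefficient $0$ and is absent), and then I would symmetrize, pairing the summand indexed by $(i,j)$ with the one indexed by $(j,i)$; the diagonal $i=j$ contributes nothing, and for $i<j$ the pair combines to $(j-i)\bigl(a_j b_i-a_i b_j\bigr)x^{\,i+j-1}$, so that
\[
W(x)=\sum_{0\le i<j}(j-i)\,b_i b_j\left(\frac{a_j}{b_j}-\frac{a_i}{b_i}\right)x^{\,i+j-1}.
\]

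Now the hypothesis does the rest: if $(a_n/b_n)$ is increasing then for $i<j$ one has $a_j/b_j-a_i/b_i\ge0$, while $j-i>0$, $b_ib_j>0$ and $x^{\,i+j-1}\ge0$ on $[0,r)$, so every summand is $\ge0$; hence $W\ge0$ and $f$ is increasing on $[0,r)$. In the strict case the single term $i=0$, $j=1$ equals $b_0b_1\bigl(a_1/b_1-a_0/b_0\bigr)>0$ and carries the factor $x^{0}=1$, so $W(x)>0$ throughout $[0,r)$ and $f$ is strictly increasing. The decreasing statement follows by applying the increasing case to the pair $(-a_n)$, $(b_n)$.

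The argument is essentially bookkeeping, so the only point needing real care is the justification of the rearrangement that produces the displayed double series and its symmetrized form; this is where I would be most attentive, relying on absolute convergence inside the common disc of radius $r$. (An alternative that sidesteps derivatives: for $0\le x<y<r$ expand $A(x)B(y)-A(y)B(x)=\sum_{0\le i<j}\bigl(a_ib_j-a_jb_i\bigr)\bigl(x^iy^j-x^jy^i\bigr)$ and observe each term is $\le0$ since $a_ib_j-a_jb_i=b_ib_j(a_i/b_i-a_j/b_j)\le0$ and $x^iy^j-x^jy^i\ge0$; this gives the same conclusion and may be cleaner, but the derivative form is the more convenient one for later use.)
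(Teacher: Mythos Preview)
Your argument is correct. The paper itself does not supply a proof of this lemma: it simply states the result and refers the reader to \cite{PV} (Ponnusamy--Vuorinen) for details. The derivative computation with symmetrized double-series expansion that you give is in fact the standard proof of this classical criterion (often attributed to Biernacki--Krzy\.z as well as Ponnusamy--Vuorinen), so you have reconstructed exactly what the cited reference contains. Your handling of the strict case via the $(i,j)=(0,1)$ term and of the singular index $(0,0)$ is fine, and your alternative divided-difference argument is a nice variant.
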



\section{Tur\'an type inequalities for Fox-Wright function}

Our first main result is asserted by the following theorem.

\begin{theorem}\label{T01}Let $\alpha,\beta>0,$ and $A, B\geq0$ such that $\epsilon>0.$ Then the Fox-Wright function ${}_p\Psi_q$  possesses the following Tur\'an type inequality:
\begin{equation}\label{001}
{}_p\Psi_q\Big[^{(\alpha_1, A_1),(\alpha_{p-1}, A_{p-1})}_{\;\;\;\;\;\;\;\;(\beta_q,B_q)}\Big| z\Big]{}_p\Psi_q\Big[^{(\alpha_1+2, A_1),(\alpha_{p-1}, A_{p-1})}_{\;\;\;\;\;\;\;\;\;\;\;(\beta_q, B_q)}\Big|z\Big]-\Bigg({}_p\Psi_q\Big[^{(\alpha_1+1, A_1),(\alpha_{p-1}, A_{p-1})}_{\;\;\;\;\;\;\;\;\;\;\;(\beta_q, B_q)}\Big| z\Big]\Bigg)^2>0,\;\;\Big(z\in(0,\infty)\Big).
\end{equation}
\end{theorem}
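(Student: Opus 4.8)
The plan is to reduce the Turán-type inequality \eqref{001} to a statement about log-convexity of the Fox--Wright function as a function of the parameter $\alpha_1$. Fix all parameters except $\alpha_1$, set $x=\alpha_1>0$, and define
$$
\Phi(x) = {}_p\Psi_q\Big[^{(x, A_1),(\alpha_{p-1}, A_{p-1})}_{\;\;\;\;\;\;\;\;\;\;\;(\beta_q,B_q)}\Big| z\Big]
= \sum_{k=0}^\infty c_k(x)\, \frac{z^k}{k!}, \qquad c_k(x)=\Gamma(x+kA_1)\,\frac{\prod_{l=2}^p\Gamma(\alpha_l+kA_l)}{\prod_{j=1}^q\Gamma(\beta_j+kB_j)}.
$$
Then \eqref{001} is precisely $\Phi(x)\,\Phi(x+2) - \Phi(x+1)^2 > 0$ for $z>0$. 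So it suffices to show that $x\mapsto\Phi(x)$ is strictly log-convex on $(0,\infty)$; the Turán inequality is the special case of the log-convexity inequality $\Phi\big(\tfrac{x+(x+2)}{2}\big)^2 \le \Phi(x)\Phi(x+2)$ with the midpoint $x+1$, together with strictness.

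First I would establish that each coefficient $c_k(x)$ is log-convex in $x$. This follows because $x\mapsto\Gamma(x+kA_1)$ is log-convex on $(0,\infty)$ for every $k$ (the Gamma function is log-convex by the Bohr--Mollerup theorem, and log-convexity is preserved under the affine substitution $x\mapsto x+kA_1$), while the remaining factor in $c_k(x)$ is a positive constant independent of $x$; multiplying a log-convex function by a positive constant keeps it log-convex. Next, I would invoke the fact that a sum (or, in the limit, a convergent series with positive terms) of log-convex functions is log-convex — this is a classical consequence of the Hölder / Cauchy--Schwarz inequality. Since $z>0$ makes every term $c_k(x) z^k/k!$ a positive log-convex function of $x$, and the series converges by the hypothesis $\epsilon>0$, the sum $\Phi(x)$ is log-convex on $(0,\infty)$. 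Writing out the Cauchy--Schwarz step explicitly: with $\lambda=\tfrac12$,
$$
\Phi(x+1)^2 = \Bigg(\sum_{k=0}^\infty \sqrt{c_k(x)}\sqrt{c_k(x+2)}\,\frac{z^k}{k!}\Bigg)^2 \le \Bigg(\sum_{k=0}^\infty c_k(x)\frac{z^k}{k!}\Bigg)\Bigg(\sum_{k=0}^\infty c_k(x+2)\frac{z^k}{k!}\Bigg) = \Phi(x)\,\Phi(x+2),
$$
where the middle inequality uses $c_k(x+1)\le\sqrt{c_k(x)c_k(x+2)}$ (log-convexity of $c_k$) in the left factor.

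The one genuine point requiring care — and the step I expect to be the main obstacle — is obtaining the \emph{strict} inequality rather than merely $\ge$. Equality in the Cauchy--Schwarz step forces the sequences $\big(\sqrt{c_k(x)}z^k/k!\big)_k$ and $\big(\sqrt{c_k(x+2)}z^k/k!\big)_k$ to be proportional, i.e. $c_k(x+2)/c_k(x)$ constant in $k$; but $c_k(x+2)/c_k(x) = (x+kA_1)(x+1+kA_1)$ when $A_1>0$ is strictly increasing in $k$, so equality is impossible and strictness follows. When $A_1=0$ this ratio is the constant $x(x+1)$, so I would handle that degenerate case separately: there $c_k(x)=\Gamma(x)\cdot(\text{const}_k)$ and $\Phi(x)=\Gamma(x)\cdot{}_p\Psi_q[\cdots]$ with the first parameter removed, reducing \eqref{001} to the classical strict log-convexity of $\Gamma$ on $(0,\infty)$, namely $\Gamma(x)\Gamma(x+2)>\Gamma(x+1)^2$, which is standard. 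In either case the strict inequality holds for all $z\in(0,\infty)$, completing the proof.
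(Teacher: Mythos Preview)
Your argument is correct and takes a genuinely different route from the paper. The paper proves the theorem by expanding both $\Phi(x)\Phi(x+2)$ and $\Phi(x+1)^2$ via the Cauchy product, subtracting, and then symmetrizing the inner sum $\sum_{j=0}^k$ by pairing $j$ with $k-j$; an explicit computation shows each paired term $T_{j,k}^{(1)}+T_{k-j,k}^{(1)}$ (and the middle term when $k$ is even) is negative. You instead exploit the strict log-convexity of $\Gamma$ to deduce that every coefficient $c_k(x)$ is log-convex in $x$, and then pass the log-convexity to the series $\Phi$ via Cauchy--Schwarz; strictness follows from the equality case of Cauchy--Schwarz when $A_1>0$ and from the strict log-convexity of $\Gamma$ when $A_1=0$. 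Your approach is shorter, more conceptual, and avoids the index bookkeeping of the symmetrization. It is worth noting that the paper itself gives essentially your log-convexity argument as an \emph{alternative} proof of the companion inequality in Theorem~\ref{T1} (the Tur\'an inequality in the denominator parameter $\beta_1$), so the two techniques are already juxtaposed in the paper---you have simply applied the cleaner one to the numerator parameter as well. One cosmetic slip: in your displayed chain the ``$=$'' after $\Phi(x+1)^2$ should be ``$\le$'' (you first use $c_k(x+1)\le\sqrt{c_k(x)c_k(x+2)}$, as you say in words, and then apply Cauchy--Schwarz); the reasoning itself is sound.
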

\begin{proof}By using the Cauchy product formula, we have
$$\Big({}_p\Psi_q\Big[^{(\alpha_1+1, A_1),(\alpha_{p-1}, A_{p-1})}_{\;\;\;\;\;\;\;\;\;(\beta_q, B_q)}\Big|z\Big]\Big)^2=\sum_{k=0}^\infty\sum_{j=0}^k\frac{\Gamma(\alpha_1+jA_1+1)\Gamma(\alpha_1+(k-j)A_1+1)\prod_{i=2}^{p}\Gamma(\alpha_i+jA_i)\Gamma(\alpha_i+(k-j)A_i)z^k}{j!(k-j)!\bigg[\prod_{i=1}^q\Gamma(\beta_i+jB_i)\Gamma(\beta_i+(k-j)B_i)\bigg]},$$
and
$${}_p\Psi_q\Big[^{(\alpha_1,A_1),(\alpha_{p-1},A_{p-1})}_{\;\;\;\;(\beta_q,B_q)}\Big|z\Big]{}_p\Psi_q\Big[^{(\alpha_1+2, A_1),(\alpha_{p-1}, Av)}_{\;\;\;\;\;\;(\beta_q, B_q)}\Big| z\Big]=$$
$$=\sum_{k=0}^\infty\sum_{j=0}^k\frac{\Gamma(\alpha_1+jA_1)\Gamma(\alpha_1+(k-j)A_1+2)\prod_{i=2}^{p}\Gamma(\alpha_i+jA_i)\Gamma(\alpha_i+(k-j)A_i)z^k}{j!(k-j)!\bigg[\prod_{i=1}^q\Gamma(\beta_i+jB_i)\Gamma(\beta_i+(k-j)B_i)\bigg]}.$$
Thus
\begin{equation*}
\Big({}_p\Psi_q\Big[^{(\alpha_1+1, A_1),(\alpha_{p-1}, A_{p-1})}_{\;\;\;\;(\beta_q, B_q)}\Big|z\Big]\Big)^2-{}_p\Psi_q\Big[^{(\alpha_1, A_1),(\alpha_{p-1}, A_{p-1})}_{\;\;\;\;(\beta_1,B_1)}\Big|z\Big]{}_p\Psi_q\Big[^{(\alpha_1+2,A_1),(\alpha_{p-1}, A_{p-1})}_{\;\;\;\;(\beta_q, B_q)}\Big|z\Big]=\sum_{k=0}^\infty\sum_{j=0}^k K_{j,k}^{(1)}T_{j,k}^{(1)}(\alpha_1, A_1)z^k,
\end{equation*}
where $T_{j,k}^{(1)}(\alpha_1, A_1)$ and $K_{j,k}^{(1)}$  are defined by 
\begin{equation*}
\begin{split}T_{j,k}^{(1)}(\alpha_1, A_1)&=\Gamma(\alpha_1+jA_1+1)\Gamma(\alpha_1+(k-j)A_1+1)-\Gamma(\alpha_1+jA_1)\Gamma(\alpha_1+(k-j)A_1+2)\\
&=[(2j-k)-1]\Gamma(\alpha_1+jA_1)\Gamma(\alpha_1+(k-j)A_1+1),
\end{split}
\end{equation*}
and 
$$K_{j,k}^{(1)}=\frac{\prod_{i=2}^{p}\Gamma(\alpha_i+jA_i)\Gamma(\alpha_i+(k-j)A_i)}{j!(k-j)!\bigg[\prod_{i=1}^q\Gamma(\beta_i+jB_i)\Gamma(\beta_i+(k-j)B_i)\bigg]}.$$
\noindent \textbf{Case 1.} Let $n$ be an even positive integer. Then
\begin{equation}
\begin{split}
\sum_{j=0}^k K_{j,k}^{(1)}T_{j,k}^{(1)}(\alpha_1, A_1)&=\sum_{j=0}^{\frac{k}{2}-1}K_{j,k}^{(1)}T_{j,k}^{(1)}(\alpha_1, A_1)+\sum_{j=\frac{k}{2}+1}^{k}K_{j,k}^{(1)}T_{j,k}^{(1)}(\alpha_1, A_1)\\&+K_{k/2,k}^{(1)}T_{k/2,k}^{(1)}(\alpha_1, A_1)\\
&=\sum_{j=0}^{k/2-1} K_{j,k}^{(1)}(T_{j,k}^{(1)}(\alpha_1, A_1)+T_{k-j,k}^{(1)}(\alpha_1, A_1))\\&+K_{k/2,k}^{(1)}T_{k/2,k}^{(1)}(\alpha_1, A_1)\\
&=\sum_{j=0}^{[\frac{k-1}{2}]} K_{j,k}^{(1)}(T_{j,k}^{(1)}(\alpha_1, A_1)+T_{k-j,k}^{(1)}(\alpha_1, A_1))\\&+K_{k/2,k}^{(1)}T_{k/2,k}^{(1)}(\alpha_1, A_1),
\end{split}
\end{equation}
where, as usual, $[k]$ denotes the greatest integer part of $k\in\mathbb{R}.$\\
\noindent \textbf{Case 2.} Let $n$ be an odd positive integer. Then, just as in Case 1, we get
\begin{equation*}
\begin{split}
\sum_{j=0}^k K_{j,k}^{(1)}T_{j,k}^{(1)}(\alpha_1, A_1)&=\sum_{j=0}^{[\frac{k-1}{2}]} K_{j,k}^{(1)}(T_{j,k}^{(1)}(\alpha_1, A_1)+T_{k-j,k}^{(1)}(\alpha_1, A_1))\\&+K_{k/2,k}^{(1)}T_{k/2,k}^{(1)}(\alpha_1, A_1).
\end{split}
\end{equation*}
Thus, by combining Case 1 and Case 2, we have
$$\Big({}_p\Psi_q\Big[^{(\alpha_1+1, A_1),(\alpha_{p-1}, A_{p-1})}_{\;\;\;\;\;\;\;\;(\beta_q, B_q)}\Big| z\Big]\Big)^2-{}_p\Psi_q\Big[^{(\alpha_1 ,A_1),(\alpha_{p-1}, A_{p-1})}_{\;\;\;\;\;\;\;\;(\beta_q, B_q)}\Big|z\Big]{}_p\Psi_q\Big[^{(\alpha_1+2, A_1),(\alpha_{p-1}, A_{p-1})}_{\;\;\;\;\;\;\;\;(\beta_q, B_q)}\Big|z\Big]=$$
\begin{equation}
\begin{split}
\;\;\;\;\;&=\sum_{k=0}^\infty\sum_{j=0}^{[\frac{k-1}{2}]} K_{j,k}^{(1)}\Big(T_{j,k}^{(1)}(\alpha_1, A_1)+T_{k-j,k}^{(1)}(\alpha_1, A_1)\Big)+K_{k/2,k}^{(1)}T_{k/2,k}^{(1)}(\alpha_1, A_1)z^k,
\end{split}
\end{equation}
which, upon simplifying, yields
\begin{equation*}
\begin{split}
T_{j,k}^{(1)}(\alpha_1, A_1)+T_{k-j,k}^{(1)}(\alpha_1, A_1))&=-\Big[(2k-j)^2+(2\alpha_1+kA_1)\Big]\Gamma(\alpha_1+(k-j)A_1)\Gamma(\alpha_1+jA_1)<0.
\end{split}
\end{equation*}
On the other hand, we have
$$T_{k/2,k}^{(1)}(\alpha_1, A_1)=-\left(\alpha_1+\frac{k}{2}\right)\Gamma^2\Big(\alpha_1+\frac{k}{2}A_1\Big)<0,$$
which evidently completes the proof of Theorem \ref{T01}.
\end{proof}

Letting in (\ref{001}) the values $A=B=1$  and using the formula (\ref{13}),  we the following Tur\'an type inequality for the hypergeometric function ${}_pF_q.$

\begin{coro}Let $\alpha,\;\beta>0.$ Then the following Tur\'an type inequality:
\begin{equation}\label{003}
{}_pF_q\left[^{\alpha_1,\alpha_2,..., \alpha_p}_{\;\;\;\;\beta_1,...,\beta_q}\Big|z\right]{}_pF_q\left[^{\alpha_1+2,\alpha_2,...,\alpha_p}_{\;\;\;\;\beta_1,...,\beta_q}\Big|z\right]-\frac{\alpha_1}{\alpha_1+1}\Big({}_pF_q\left[^{\alpha_1+1,\alpha_2,...,\alpha_q}_{\;\;\;\;\beta_1,...,\beta_q}\Big|z\right]\Big)^2>0,
\end{equation}
holds true for all $z\in(0,\infty).$
\end{coro}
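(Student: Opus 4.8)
The plan is to derive \eqref{003}, or equivalently (\ref{003}), as a direct specialization of Theorem~\ref{T01} to the case $A_1=\cdots=A_p=1$ and $B_1=\cdots=B_q=1$. Under this choice the quantity $\epsilon=1+\sum_l B_l-\sum_l A_l$ equals $1+q-p$, so the hypothesis $\epsilon>0$ of Theorem~\ref{T01} becomes $p\le q$; this is inherited silently by the corollary and in particular guarantees that each of the three hypergeometric series below converges for every $z\in(0,\infty)$, so that the Tur\'an inequality (\ref{001}) is available on $(0,\infty)$ in this specialization.

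The next step is to rewrite every Fox-Wright factor appearing in (\ref{001}) by means of the reduction formula (\ref{13}). Putting
$$\kappa=\frac{\prod_{i=2}^{p}\Gamma(\alpha_i)}{\prod_{j=1}^{q}\Gamma(\beta_j)}\;>\;0,$$
formula (\ref{13}) gives, for each $m\in\{0,1,2\}$,
$${}_p\Psi_q\Big[^{(\alpha_1+m,1),(\alpha_{p-1},1)}_{\;\;\;\;\;\;\;\;\;(\beta_q,1)}\Big|z\Big]=\kappa\,\Gamma(\alpha_1+m)\;{}_pF_q\left[^{\alpha_1+m,\alpha_2,\dots,\alpha_p}_{\;\;\;\;\;\beta_1,\dots,\beta_q}\Big|z\right].$$
Substituting these three identities into (\ref{001}) and pulling out the positive constant $\kappa^2$, the inequality (\ref{001}) turns into
$$\Gamma(\alpha_1)\,\Gamma(\alpha_1+2)\,F_0\,F_2-\big(\Gamma(\alpha_1+1)\big)^2\,F_1^{\,2}>0,$$
where $F_m={}_pF_q\left[^{\alpha_1+m,\alpha_2,\dots,\alpha_p}_{\;\;\;\;\;\beta_1,\dots,\beta_q}\Big|z\right]$ is written for brevity.

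Finally I would clear the Gamma values using the functional equation $\Gamma(t+1)=t\,\Gamma(t)$, namely $\Gamma(\alpha_1+1)=\alpha_1\Gamma(\alpha_1)$ and $\Gamma(\alpha_1+2)=\alpha_1(\alpha_1+1)\Gamma(\alpha_1)$. The left-hand side of the last display then equals
$$\alpha_1(\alpha_1+1)\big(\Gamma(\alpha_1)\big)^2\left(F_0\,F_2-\frac{\alpha_1}{\alpha_1+1}\,F_1^{\,2}\right),$$
and dividing through by the strictly positive factor $\kappa^2\,\alpha_1(\alpha_1+1)\big(\Gamma(\alpha_1)\big)^2$ — where the hypotheses $\alpha_1>0$ and $\alpha,\beta>0$ are exactly what is needed to make this factor finite and positive — leaves precisely the inequality (\ref{003}). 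I do not expect any genuine obstacle here: the deduction is routine, and the only points requiring care are the sign of that divisor, which is immediate from $\alpha,\beta>0$, and the bookkeeping of Gamma-function ratios when passing from (\ref{001}) to (\ref{003}).
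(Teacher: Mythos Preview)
Your proposal is correct and follows essentially the same route as the paper: the paper states only that one sets $A=B=1$ in (\ref{001}) and invokes the reduction formula (\ref{13}), and you have carried out precisely that specialization, supplying the Gamma-function bookkeeping in full. There is no substantive difference in approach; you have merely made explicit the constants $\kappa$ and $\alpha_1(\alpha_1+1)\Gamma(\alpha_1)^2$ that the paper leaves implicit.
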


\begin{theorem}\label{T1}Let $\alpha,\beta>0,$ and $A, B\geq0$ such that $\epsilon>0.$ Then the following Tur\'an type inequalities
\begin{equation}\label{1}
{}_p\Psi_q\Big[^{\;\;\;\;(\alpha_p,A_p)}_{(\beta_1,B_1),(\beta_{q-1},B_{q-1})}\Big|z\Big]{}_p\Psi_q\Big[^{\;\;\;\;(\alpha_p,A_p)}_{(\beta_1+2,B_1),(\beta_{q-1},B_{q-1})}\Big|z\Big]-\frac{\beta_1}{\beta_1+1}\Bigg({}_p\Psi_q\Big[^{\;\;\;\;(\alpha_p,A_p)}_{(\beta_1+1,B_1),(\beta_{q-1},B_{q-1})}\Big|z\Big]\Bigg)^2\geq0,
\end{equation}
holds true for all $z\in(0,\infty).$ Moreover, the Hypergeometric function $_pF_q$ satisfies the following Tur\'an type inequality
\begin{equation}\label{2}
{}_pF_q\left[^{\;\;\;\;\alpha_1,..., \alpha_p}_{\beta_1,\beta_2,...,\beta_q}\Big|z\right] {}_pF_q\left[^{\;\;\;\;\alpha_1,..., \alpha_p}_{\beta_1+2,\beta_2,...,\beta_q}\Big|z\right]-\Big({}_pF_q\left[^{\;\;\;\;\alpha_1,..., \alpha_p}_{\beta_1+1, \beta_2,...,\beta_q}\Big|z\right]\Big)^2\geq0,\;\Big(\;z\in(0,\infty)\;\Big).
\end{equation}
\end{theorem}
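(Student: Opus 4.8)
The plan is to repeat the Cauchy--product scheme used in the proof of Theorem~\ref{T01}, now shifting the \emph{denominator} parameter $\beta_1$ instead of a numerator parameter. Put
$$c_k=\frac{\prod_{l=1}^{p}\Gamma(\alpha_l+kA_l)}{k!\,\prod_{j=2}^{q}\Gamma(\beta_j+kB_j)}>0,$$
so that the three Fox--Wright functions appearing in (\ref{1}) have, as $k$-th Taylor coefficients, $c_k/\Gamma(\beta_1+kB_1)$, $c_k/\Gamma(\beta_1+1+kB_1)$ and $c_k/\Gamma(\beta_1+2+kB_1)$ respectively. Forming the two Cauchy products, abbreviating $u=\beta_1+jB_1$ and $v=\beta_1+(k-j)B_1$, and reducing by $\Gamma(t+1)=t\,\Gamma(t)$ and $\Gamma(t+2)=t(t+1)\Gamma(t)$, one checks that the coefficient of $z^{k}$ in the left--hand side of (\ref{1}) equals
$$\sum_{j=0}^{k}\frac{c_j c_{k-j}}{\Gamma(u)\Gamma(v)}\,\Lambda_{j,k},\qquad \Lambda_{j,k}=\frac{1}{v(v+1)}-\frac{\beta_1}{\beta_1+1}\cdot\frac{1}{uv}=\frac{B_1\bigl((2\beta_1+1)(2j-k)+k\bigr)}{2(\beta_1+1)\,u\,v\,(v+1)}.$$

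Next I would split the inner sum exactly as in Cases~1 and~2 of the proof of Theorem~\ref{T01}, pairing the index $j$ with $k-j$ and keeping the central index $j=k/2$ alone when $k$ is even. Since $\Gamma(u)\Gamma(v)$ and $c_jc_{k-j}$ are invariant under $j\leftrightarrow k-j$, each pair contributes a positive multiple of $\Lambda_{j,k}+\Lambda_{k-j,k}$, and writing $m:=2j-k$ (so that $u-v=B_1m$) one obtains
$$\Lambda_{j,k}+\Lambda_{k-j,k}=\frac{B_1}{2(\beta_1+1)uv}\left[\,k\Bigl(\frac{1}{u+1}+\frac{1}{v+1}\Bigr)+\frac{(2\beta_1+1)B_1 m^{2}}{(u+1)(v+1)}\right]\ge 0,$$
while the central term is $\Lambda_{k/2,k}=\dfrac{B_1k}{2(\beta_1+1)u^{2}(u+1)}\ge 0$. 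Hence every Taylor coefficient of the left--hand side of (\ref{1}) is nonnegative, which gives (\ref{1}) on $(0,\infty)$; in fact the inequality is strict there as soon as $B_1>0$ (as one sees from the coefficient of $z^{1}$), the value $B_1=0$ being the degenerate equality case.

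For (\ref{2}) I would specialize $A_1=\cdots=A_p=B_1=\cdots=B_q=1$ in (\ref{1}) and invoke (\ref{13}): with these values each of the three Fox--Wright functions equals $\dfrac{\prod_{l}\Gamma(\alpha_l)}{\Gamma(\beta_1+s)\prod_{j\ge2}\Gamma(\beta_j)}$ times the corresponding ${}_pF_q$ with first lower parameter $\beta_1+s$ ($s=0,1,2$). Dividing (\ref{1}) through by the common positive constant $\bigl(\prod_{l}\Gamma(\alpha_l)\bigr)^{2}\bigl(\prod_{j\ge2}\Gamma(\beta_j)\bigr)^{-2}$ and inserting $\Gamma(\beta_1+1)=\beta_1\Gamma(\beta_1)$ and $\Gamma(\beta_1+2)=\beta_1(\beta_1+1)\Gamma(\beta_1)$, the left--hand side collapses to
$$\frac{{}_pF_q(\beta_1)\,{}_pF_q(\beta_1+2)-\bigl({}_pF_q(\beta_1+1)\bigr)^{2}}{\beta_1(\beta_1+1)\,\Gamma(\beta_1)^{2}};$$
since the denominator is positive, this is precisely inequality (\ref{2}).

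The one genuinely delicate step is the sign analysis of the paired term: because $\Lambda_{j,k}$ changes sign with $j$, no termwise estimate can succeed, and it is only after the symmetrization $j\leftrightarrow k-j$ together with the identity $u-v=B_1m$ that the sign--indefinite contributions collapse into the manifestly nonnegative multiple of $m^{2}$. Once that algebraic identity is secured, the remainder runs in parallel with the proof of Theorem~\ref{T01}, and the passage to (\ref{2}) is a routine cancellation of Gamma factors.
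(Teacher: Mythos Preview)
Your argument is correct and is essentially the paper's own primary proof: the paper likewise forms the Cauchy product, extracts the term $T_{k,j}^{(2)}=\dfrac{B_1[\beta_1(2j-k)+j]}{\Gamma(\beta_1+jB_1+1)\Gamma(\beta_1+(k-j)B_1+2)}$, pairs $j$ with $k-j$, and checks that the symmetrized numerator is manifestly nonnegative. The paper also notes a shorter alternative you might like: since $\psi'$ is decreasing, each coefficient of $\Gamma(\beta_1)\,{}_p\Psi_q$ is log-convex in $\beta_1$, hence so is the sum, and (\ref{1}) is just the midpoint log-convexity inequality at $\beta_1,\beta_1+2$.
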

\begin{proof} We set 
\begin{equation}
\tilde{{}_p\Psi_q}\Big[^{(\alpha_p, A_p)}_{(\beta_q, B_q)}\Big| z\Big]=\Gamma(\beta_1){}_p\Psi_q\Big[^{(\alpha_p, A_p)}_{(\beta_q, B_q)}\Big| z\Big].
\end{equation}
By using the Cauchy product we get 
\begin{equation}
\tilde{{}_p\Psi_q}\Big[^{\;\;\;\;(\alpha_p,  A_p)}_{(\beta_1, B_1),(\beta_{q-1}, B_{q-1})}\Big| z\Big]\tilde{{}_p\Psi_q}\Big[^{\;\;\;\;(\alpha_p, A_p)}_{(\beta_1+2, B_1),(\beta_{q-1}, B_{q-1})}\Big|z\Big]-\tilde{{}_p\Psi_q}^2\Big[^{\;\;\;\;(\alpha_p, A_p)}_{(\beta_1+1, B_1),(\beta_{q-1}, B_{q-1})}\Big| z\Big]=
\end{equation}
$$\;\;\;\;\;\;\;\;\;\;\;\;\;\;\;\;\;\;\;\;\;\;\;\;\;\;\;\;\;\;\;\;\;\;\;\;\;\;\;\;\;\;\;\;\;\;\;\;\;\;\;\;\;\;\;\;\;\;\;\;\;\;\;\;\;\;\;\;\;\;\;\;\;\;\;=\Gamma(\beta_1)\Gamma(\beta_1+1)\sum_{k=0}^\infty\sum_{j=0}^k K_{k,j}^{(2)}T_{k,j}^{(2)}(\beta_1,B_1)z^k,$$
where
\begin{equation}
K_{k,j}^{(2)}=\frac{\prod_{i=1}^p\Gamma(\alpha_i+jA_i)\Gamma(\alpha_i+(k-j)A_i)}{j!(k-j)!\prod_{i=2}^q\Gamma(\beta_i+jB_i)\Gamma(\beta_i+(k-j)B_i)}
\end{equation}
and
\begin{equation}
T_{k,j}^{(2)}(\beta_1,B_1)=\frac{\beta_1B_1(2j-k)+jB_1}{\Gamma(\beta_1+jB_1+1)\Gamma(\beta_1+(k-j)B_1+2)}.
\end{equation}
If $k$ is even, we have 
\begin{equation}
\begin{split}
\sum_{j=0}^k K_{k,j}^{(2)}T_{k,j}^{(2)}(\beta_1,B_1)&=\sum_{j=0}^{k/2-1} K_{k,j}^{(2)}T_{k,j}^{(2)}(\beta_1,B_1)+\sum_{j=k/2+1}^k K_{k,j}^{(2)}T_{k,j}^{(2)}(\beta_1,B_1)\\&+K_{k,k/2}^{(2)}T_{k,k/2}^{(2)}(\beta_1,B_1)\\
&=\sum_{j=0}^{k/2-1} K_{k,j}^{(2)}T_{k,j}^{(2)}(\beta_1,B_1)+\sum_{j=0}^{k/2-1} K_{k,j}^{(2)}T_{k,k-j}^{(2)}(\beta_1,B_1)\\&+K_{k,k/2}^{(2)}T_{k,k/2}^{(2)}(\beta_1,B_1)\\
&=\sum_{j=0}^{[(k-1)/2]} K_{k,j}^{(2)}\left(T_{k,j}^{(2)}(\beta_1,B_1)+T_{k,k-j}^{(2)}(\beta_1,B_1)\right)+K_{k,k/2}^{(2)}T_{k,k/2}^{(2)}(\beta_1,B_1).
\end{split}
\end{equation}
where $[.]$ denotes the greatest integer function.  Similarly, if $k$ is odd, then
$$\sum_{j=0}^k K_{k,j}^{(2)}T_{k,j}^{(2)}(\beta_1,B_1)=\sum_{j=0}^{[(k-1)/2]} K_{k,j}^{(2)}(\beta_1,B_1)\left(T_{k,j}^{(2)}(\beta_1,B_1)+T_{k,k-j}^{(2)}(\beta_1,B_1)\right)+K_{k,k/2}^{(2)}T_{k,k/2}^{(2)}(\beta_1,B_1).$$
A simple computation we get 
\begin{equation}
T_{k,j}^{(2)}(\beta_1,B_1)+T_{k,k-j}^{(2)}(\beta_1,B_1)=\frac{B_1\beta_1(k-2j)^2+j^2B_1^2+B_1^2(k-j)^2+k(B_1+\beta_1)}{\Gamma(\beta_1+jB_1+2)\Gamma(\beta_1+(k-j)B_1+2)}\geq0,
\end{equation}
and using the fact 
\begin{equation}
K_{k,k/2}^{(2)}T_{k,k/2}^{(2)}(\beta_1,B_1)=\frac{B_1k\prod_{i=1}^p\Gamma^2(\alpha_i+\frac{kA_i}{2})}{2\Gamma^2(\frac{k}{2}+1)\Gamma(\beta_1+\frac{kB_1}{2}+1)\Gamma(\beta_1+\frac{kB_1}{2}+2)\prod_{i=2}^q\Gamma^2(\beta_i+\frac{kB_i}{2})}\geq0,
\end{equation}
we deduce that 
\begin{equation}
\tilde{{}_p\Psi_q}\Big[^{\;\;\;\;\;\;\;(\alpha_p, A_p)}_{(\beta_1, B_1),(\beta_{q-1}, B_{q-1})}\Big|z\Big]\tilde{{}_p\Psi_q}\Big[^{\;\;\;\;\;\;\;(\alpha_p, A_p)}_{(\beta_1+2,B_1),(\beta_{q-1}, B_{q-1})}\Big| z\Big]-\tilde{{}_p\Psi_q}^2\Big[^{\;\;\;\;\;\;\;(\alpha_p, A_p)}_{(\beta_1+1,B_1),(\beta_{q-1}, B_{q-1})}\Big|z\Big]\geq0.
\end{equation}
It is important to mention here that there is another proof of the inequalities (\ref{1}). Namely, we consider the expression
$$\tilde{{}_p\Psi_q}\Big[^{(\alpha_p, A_p)}_{(\beta_q, B_q)}\Big| z\Big]=\sum_{n=0}^\infty \delta_{A,B,n}(\alpha,\beta)z^n,\;\textrm{where}\;\; \delta_{A,B,n}(\alpha,\beta)=\frac{\Gamma(\beta_1)\prod_{i=1}^p\Gamma(\alpha_i+nA_i)}{\Gamma(\beta_1+nB_1)\prod_{i=2}^q\Gamma(\beta_i+nB_i)}.$$
Computations show that for each $n\geq0$ we get
$$\frac{\partial^2\log[\delta_{A,B,n}(\alpha,\beta)] }{\partial \beta_1^2}=\psi^\prime(\beta_1)-\psi(\beta_1+nB_1),$$
where $\psi(x)=\frac{\Gamma^\prime(x)}{\Gamma(x)}$ is the digamma function. It is well known that the function $x\mapsto \psi(x)$ is concave on $(0,\infty),$ i.e. the trigamma function $x\mapsto\psi^\prime(x)$ is decreasing on $(0,\infty).$ Therefore, the function $\beta_1\mapsto\delta_{A,B,n}(\alpha,\beta)$ is log-convex on $(0,\infty).$ Thus, the function $\beta_1\mapsto\tilde{{}_p\psi_q}\Big[^{(\alpha_p,A_q)}_{(\beta_q,B_q)}; z\Big]$ is also log-convex on $(0,\infty).$ So, for all $\alpha, \beta, \beta_1^\prime>0,$ and $t\in[0,1],$ we get
\begin{equation}\label{a}
\tilde{{}_p\psi_q}\Big[^{\;\;\;\;\;\;\;\;\;\;\;\;(\alpha_p, A_p)}_{(t\beta_1+(1-t)\beta_1^{\prime}, B_1),(\beta_{q-1}, B_{q-1})}\Big| z\Big]\leq\Big(\tilde{{}_p\psi_q}\Big[^{\;\;\;\;\;\;(\alpha_p, A_p)}_{(\beta_1,B_1),(\beta_{q-1}, B_{q-1})}\Big| z\Big]\Big)^t\Big(\tilde{{}_p\psi_q}\Big[^{\;\;\;\;\;\;(\alpha_p, A_p)}_{(\beta_1^{\prime}, B_1),(\beta_{q-1}, B_{q-1})}\Big|z\Big]\Big)^{1-t}.
\end{equation}
Letting $t=1/2$ and $\beta_1^{\prime}=\beta_1+2,$ in the above inequality we deduce that the inequality (\ref{1}) holds true.
The inequality (\ref{2}) follow by using the inequalities (\ref{1}) and (\ref{13}). So, the proof of Theorem \ref{T1} is completes.
\end{proof}

Choosing in (\ref{1}) the values $p=1,\alpha_1=A_1=1,$  we obtain the following Tur\'an type inequality for the The generalized $2n-$parametric Mittag-Leffler function:  

\begin{coro} Let $\beta>0$ and $B\geq0.$  Then the following Tur\'an type inequality
\begin{equation}\label{yyy}
E_{B_1,\beta_1;...;B_n,\beta_n}(z)E_{B_1, \beta_1+2;...;B_n,\beta_n}(z)-\frac{\beta_1}{\beta_1+1}\Big(E_{B_1, \beta_1+1;...;B_n,\beta_n}(z)\Big)^2\geq0,
\end{equation}
holds true for all $z>0.$
\end{coro}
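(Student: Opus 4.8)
The plan is to obtain the corollary as a direct specialization of Theorem~\ref{T1}. Recall from \eqref{199} that the generalized $2n$-parametric Mittag--Leffler function is exactly
$$E_{B_1,\beta_1;\dots;B_n,\beta_n}(z)={}_1\Psi_n\Big[_{(\beta_1,B_1),\dots,(\beta_n,B_n)}^{\;\;\;\;\;\;\;\;(1,1)}\Big|z\Big],$$
so it is the Fox--Wright function ${}_p\Psi_q$ with $p=1$, $q=n$, single numerator pair $(\alpha_1,A_1)=(1,1)$, and denominator pairs $(\beta_1,B_1),\dots,(\beta_n,B_n)$. Thus the first step is merely to check that the hypotheses of Theorem~\ref{T1} are met under the corollary's assumptions $\beta>0$, $B\geq 0$: the numerator parameter $\alpha_1=1>0$ and $A_1=1\geq0$ are fine, and one verifies $\epsilon=1+\sum_{j=1}^n B_j-A_1=\sum_{j=1}^n B_j\geq 0$; strictly speaking one needs $\epsilon>0$, which holds as soon as the $B_j$ are not all zero (and the all-zero case reduces to an elementary hypergeometric statement, or can simply be excluded as degenerate).

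Second, I would substitute these parameter values directly into inequality~\eqref{1}. The left factor ${}_p\Psi_q\big[^{(\alpha_p,A_p)}_{(\beta_1,B_1),(\beta_{q-1},B_{q-1})}\big|z\big]$ becomes ${}_1\Psi_n\big[^{(1,1)}_{(\beta_1,B_1),(\beta_2,B_2),\dots,(\beta_n,B_n)}\big|z\big]=E_{B_1,\beta_1;\dots;B_n,\beta_n}(z)$; the factor with $\beta_1$ replaced by $\beta_1+2$ becomes $E_{B_1,\beta_1+2;\dots;B_n,\beta_n}(z)$; and the squared factor with $\beta_1$ replaced by $\beta_1+1$ becomes $E_{B_1,\beta_1+1;\dots;B_n,\beta_n}(z)$. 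The coefficient $\frac{\beta_1}{\beta_1+1}$ carries over verbatim. This yields precisely \eqref{yyy} for all $z\in(0,\infty)$, which is the claim.

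There is essentially no obstacle here: the corollary is a pure specialization, and the only thing to be careful about is bookkeeping with the Fox--Wright parameter conventions (which index is being shifted, and confirming that shifting $\beta_1$ by $2$ in the general statement corresponds to shifting the first Mittag--Leffler parameter $\beta_1$ while leaving $B_1$ and all other pairs fixed). If one wanted a self-contained argument instead, the second proof of Theorem~\ref{T1} via log-convexity specializes cleanly as well: one checks that $n\mapsto \delta_{B,n}(\beta):=\big(\Gamma(\beta_1+nB_1)\prod_{i=2}^n\Gamma(\beta_i+nB_i)\big)^{-1}$, as a function of $\beta_1$, has $\partial_{\beta_1}^2\log\delta_{B,n}=-\psi'(\beta_1+nB_1)<0$ for each fixed $n$, hence is log-concave; but the series $E=\sum_n \delta_{B,n}(\beta)z^n$ has each coefficient log-convex in $\beta_1$ after the normalization by $\Gamma(\beta_1)$, exactly as in the displayed argument preceding \eqref{a}, and setting $t=1/2$, $\beta_1'=\beta_1+2$ gives \eqref{yyy}. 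Either route is short; I would present the one-line specialization of \eqref{1} as the main proof.
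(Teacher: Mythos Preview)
Your proof is correct and matches the paper's approach exactly: the corollary is stated immediately after Theorem~\ref{T1} as the specialization of inequality~\eqref{1} with $p=1$, $\alpha_1=A_1=1$ (and $q=n$), which is precisely what you do. Your additional remarks on the $\epsilon>0$ condition and the alternative log-convexity route are accurate but not needed for the paper's one-line derivation.
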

\begin{coro} The generalized hypergeometric function $_2F_2$ possesses the following inequality:
\begin{equation}{}_2F_2\left[^{\beta_1-\alpha_1-1,\;\;f+1}_{\beta_1,\;\;f}\Big|z\right] {}_2F_2\left[^{\beta_1-\alpha_1+1,\;\; g+1}_{\beta_1+2,\;\; g}\Big|z\right]-\Big({}_2F_2\left[^{\beta_1-\alpha_1,\;\; h+1}_{\beta_1+1,\;\; h}\Big|z\right]\Big)^2\geq0,\;\Big(\;z\in(-\infty,0)\Big)
\end{equation}
with
$$f=\frac{\beta_2(1+\alpha_1-\beta_1)}{\alpha_1-\beta_2},\;g=\frac{\beta_2(\alpha_1-\beta_1-1)}{\alpha_1-\beta_2}\;\textrm{and}\;\;h=\frac{\beta_2(\alpha_1-\beta_1)}{\alpha_1-\beta_2}$$
\end{coro}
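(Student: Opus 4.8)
The plan is to deduce this corollary from the case $p=q=2$ of inequality (\ref{2}) by applying a Kummer-type transformation to each of the three ${}_2F_2$'s appearing on the left-hand side, which converts all of them (up to a common exponential factor) into ${}_2F_2$'s with one and the same numerator pair, so that (\ref{2}) applies verbatim.

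The key tool is the transformation
$${}_2F_2\left[^{a,\;d+1}_{b,\;\;d}\Big|x\right]=e^{x}\,{}_2F_2\left[^{b-a-1,\;d^\prime+1}_{\;\;\;\;b,\;\;\;d^\prime}\Big|-x\right],\qquad d^\prime=\frac{d(b-a-1)}{d-a},$$
valid for all $x\in\R$ whenever both sides are defined. To prove it one first removes the redundant pair of parameters via $(d+1)_k/(d)_k=1+k/d$, obtaining
$${}_2F_2\left[^{a,\;d+1}_{b,\;\;d}\Big|x\right]={}_1F_1(a;b;x)+\frac{ax}{bd}\,{}_1F_1(a+1;b+1;x);$$
then one applies Kummer's transformation ${}_1F_1(a;b;x)=e^{x}{}_1F_1(b-a;b;-x)$ to both confluent functions, and a short manipulation of Pochhammer symbols recombines $e^{x}\big[{}_1F_1(b-a;b;-x)+\tfrac{ax}{bd}{}_1F_1(b-a;b+1;-x)\big]$ into a single ${}_2F_2$ of the same shifted-pair shape, with the denominator parameter $d^\prime$ as stated. (Alternatively this identity may simply be quoted from the literature on transformations of ${}_2F_2$.)

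Next, one applies this formula to each factor on the left-hand side of the corollary, namely with $(a,d,b)$ equal to $(\beta_1-\alpha_1-1,f,\beta_1)$, $(\beta_1-\alpha_1,h,\beta_1+1)$, and $(\beta_1-\alpha_1+1,g,\beta_1+2)$ in turn. In every case $b-a-1=\alpha_1$, and using $1+\alpha_1-\beta_1=-(\beta_1-\alpha_1-1)$ together with the definitions of $f,g,h$ one computes $d^\prime=\beta_2$ each time; for instance $f-(\beta_1-\alpha_1-1)=(1+\alpha_1-\beta_1)\alpha_1/(\alpha_1-\beta_2)$, whence $d^\prime=f\alpha_1/[f-(\beta_1-\alpha_1-1)]=\beta_2$. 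Therefore
$${}_2F_2\left[^{\beta_1-\alpha_1-1,\;f+1}_{\;\;\;\;\;\;\beta_1,\;\;f}\Big|z\right]=e^{z}\,{}_2F_2\left[^{\alpha_1,\;\beta_2+1}_{\beta_1,\;\;\beta_2}\Big|-z\right],$$
and similarly the $h$-function equals $e^{z}\,{}_2F_2\left[^{\alpha_1,\;\beta_2+1}_{\beta_1+1,\;\beta_2}\Big|-z\right]$ and the $g$-function equals $e^{z}\,{}_2F_2\left[^{\alpha_1,\;\beta_2+1}_{\beta_1+2,\;\beta_2}\Big|-z\right]$.

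Substituting these three identities into the left-hand side of the corollary, the exponential factors collect into $e^{2z}>0$, and the remaining bracket is exactly
$${}_2F_2\left[^{\alpha_1,\;\beta_2+1}_{\beta_1,\;\beta_2}\Big|-z\right]{}_2F_2\left[^{\alpha_1,\;\beta_2+1}_{\beta_1+2,\;\beta_2}\Big|-z\right]-\left({}_2F_2\left[^{\alpha_1,\;\beta_2+1}_{\beta_1+1,\;\beta_2}\Big|-z\right]\right)^2.$$
For $z\in(-\infty,0)$ we have $-z\in(0,\infty)$, so by inequality (\ref{2}) of Theorem \ref{T1}, applied with $p=q=2$, numerator pair $(\alpha_1,\beta_2+1)$, first denominator parameter running through $\beta_1,\beta_1+1,\beta_1+2$ and second denominator parameter fixed equal to $\beta_2$, this bracket is $\geq0$; multiplying back by $e^{2z}>0$ gives the assertion. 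The one genuinely delicate point is the first step — establishing the correct Kummer-type transformation and verifying that the opaque constants $f,g,h$ are precisely the images of $\beta_2$ under it; after that, positivity is delivered directly by (\ref{2}). Throughout one keeps the tacit nondegeneracy hypotheses ($\alpha_1,\beta_1,\beta_2>0$, $\alpha_1\neq\beta_2$, and $\alpha_1\notin\{\beta_1-1,\beta_1,\beta_1+1\}$) under which all six ${}_2F_2$'s are well defined.
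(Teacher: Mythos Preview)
Your proposal is correct and follows essentially the same route as the paper: apply Paris's Kummer-type transformation for ${}_2F_2$ to each of the three factors (the paper simply cites \cite[Eq.~4]{Paris} for this, whereas you supply a derivation), reducing the claim to the Tur\'an-type inequality (\ref{2}) of Theorem \ref{T1} with $p=q=2$ at the positive argument $-z$. The only difference is cosmetic: the paper quotes the transformation and says it ``lead[s] to the asserted inequality,'' while you spell out the parameter bookkeeping that shows $d'=\beta_2$ in all three cases.
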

\begin{proof}	By means of the Kummer transformation for the hypergeometric function $_2F_2$ reported by Paris \cite[Eq. 4]{Paris}
$$_2F_2\left[_{b,\;\;c}^{a,\;\;c+1}\Big|z\right]=e^z\; _2F_2\left[_{b,\;\;f_1}^{a,\;\;f_1+1}\Big|-z\right],\;\textrm{with}\;\;f_1=\frac{c(1+a-b)}{a-c},$$
and the Tur\'an type inequality (\ref{2}) lead to the asserted inequality.
\end{proof}
\begin{remark}
\noindent \textbf{a.} If we choose $p=q=1,B_1=\alpha,\beta_1=\beta$ and $A_1=0$  in (\ref{1}) we obtain the following Tur\'an type inequalities for the Wright function \cite[Theorem 3.1]{Khaled1}:
\begin{equation*}
\mathcal{W}_{\alpha,\beta}(z)\mathcal{W}_{\alpha,\beta+2}(z)-\mathcal{W}_{\alpha,\beta+1}^2(z)\geq0,
\end{equation*}
where $\mathcal{W}_{\alpha,\beta}(z)=\Gamma(\beta)W_{\alpha,\beta}(z).$\\
\noindent \textbf{b.} Letting $n=2$ in (\ref{yyy}), we deduce the following Tur\'an type inequalities for the Mittag--Leffler function  \cite[Theorem 1]{KhSi1}:
\begin{equation*}
\mathbb{E}_{\alpha,\beta}(z)\mathbb{E}_{\alpha,\beta+2}(z)-\mathbb{E}_{\alpha,\beta+1}^2(z)\geq0,
\end{equation*}
where $\mathbb{E}_{\alpha,\beta}(z)=\Gamma(\beta)E_{\alpha,\beta}(z).$
\end{remark}

\begin{theorem}\label{T2} Let $\alpha,\beta,\beta_1^\prime>0,$ and $A, B\geq0$ such that $\epsilon>0.$ . If $\beta_1^\prime<\beta_1,\;(\beta_1<\beta_1^\prime),$ then the function 
$$z\mapsto {}_p\Psi_q\Big[_{(\beta_1, B_1),(\beta_{q-1}, B_{q-1})}^{\;\;\;\;\;\;(\alpha_p, A_p)}\Big|z \Big]\Big/{}_p\Psi_q\Big[_{(\beta_1^\prime, B_1),(\beta_{q-1}, B_{q-1})}^{\;\;\;\;\;\;(\alpha_p, A_p)}\Big|z \Big],$$
is decreasing (increasing) on $(0,\infty).$ Moreover, the following inequality 
\begin{equation}
{}_p\Psi_q\Big[_{(\beta_1+B_1, B_1),(\beta_{q-1}+B_{q-1}, B_{q-1})}^{\;\;\;\;\;\;\;(\alpha_p+A_p, A_p)}\Big|z \Big]{}_p\Psi_q\Big[_{(\beta_1^\prime, B_1),(\beta_{q-1}, B_{q-1})}^{\;\;\;\;\;(\alpha_p, A_p)}\Big|z \Big]\leq(\geq){}_p\Psi_q\Big[_{(\beta_1^\prime+B_1, B_1),(\beta_{q-1}+B_{q-1}, B_{q-1})}^{\;\;\;\;\;(\alpha_p+A_p, A_p)}\Big|z \Big]{}_p\Psi_q\Big[_{(\beta_q,B_q)}^{(\alpha_p, A_p)}\Big|z \Big], 
\end{equation}
holds. 
\end{theorem}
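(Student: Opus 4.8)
The plan is to represent the quotient as a ratio of two entire power series with positive coefficients, apply Lemma \ref{l2}, and then recognize the claimed product inequality as a restatement of the resulting monotonicity.

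First I would set
$$F(z):={}_p\Psi_q\Big[_{(\beta_1, B_1),(\beta_{q-1}, B_{q-1})}^{\;\;\;\;\;\;(\alpha_p, A_p)}\Big|z \Big]=\sum_{k=0}^\infty a_kz^k,\qquad G(z):={}_p\Psi_q\Big[_{(\beta_1^\prime, B_1),(\beta_{q-1}, B_{q-1})}^{\;\;\;\;\;\;(\alpha_p, A_p)}\Big|z \Big]=\sum_{k=0}^\infty b_kz^k,$$
with $a_k=\dfrac{\prod_{l=1}^p\Gamma(\alpha_l+kA_l)}{k!\,\Gamma(\beta_1+kB_1)\prod_{i=2}^q\Gamma(\beta_i+kB_i)}$ and $b_k$ the same expression with $\beta_1$ replaced by $\beta_1^\prime$. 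Under the hypotheses $\alpha,\beta,\beta_1^\prime>0$, $A,B\ge0$ one has $a_k,b_k>0$ for all $k$, and $\epsilon>0$ makes both series entire, so Lemma \ref{l2} applies with $r=\infty$. Now $\dfrac{a_k}{b_k}=\dfrac{\Gamma(\beta_1^\prime+kB_1)}{\Gamma(\beta_1+kB_1)}=g(kB_1)$ where $g(x):=\Gamma(\beta_1^\prime+x)/\Gamma(\beta_1+x)$; logarithmic differentiation gives $(\log g)^\prime(x)=\psi(\beta_1^\prime+x)-\psi(\beta_1+x)$ with $\psi$ the digamma function, and since $\psi^\prime(x)=\sum_{n\ge0}(x+n)^{-2}>0$ the function $\psi$ is strictly increasing on $(0,\infty)$. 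Hence $g$ is decreasing when $\beta_1^\prime<\beta_1$ and increasing when $\beta_1<\beta_1^\prime$ (and constant if $B_1=0$), so $(a_k/b_k)_{k\ge0}$ is decreasing (increasing) accordingly; Lemma \ref{l2} then yields that $F/G$ is decreasing (increasing) on $(0,\infty)$, which is the first assertion.

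For the product inequality I would next record the elementary identities
$${}_p\Psi_q\Big[_{(\beta_1+B_1, B_1),(\beta_{q-1}+B_{q-1}, B_{q-1})}^{\;\;\;\;\;\;\;(\alpha_p+A_p, A_p)}\Big|z \Big]=F^\prime(z),\qquad {}_p\Psi_q\Big[_{(\beta_1^\prime+B_1, B_1),(\beta_{q-1}+B_{q-1}, B_{q-1})}^{\;\;\;\;\;(\alpha_p+A_p, A_p)}\Big|z \Big]=G^\prime(z),$$
which follow by comparing coefficients: the coefficient of $z^k$ on each left-hand side equals $\prod_{l}\Gamma(\alpha_l+(k+1)A_l)\big/\big(k!\prod_{i}\Gamma(\beta_i+(k+1)B_i)\big)$ (with $\beta_1$, resp.\ $\beta_1^\prime$, in the $i=1$ factor), which is exactly $(k+1)$ times the coefficient of $z^{k+1}$ in $F$, resp.\ $G$. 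Since $G(z)>0$ on $(0,\infty)$ and $F/G$ is real-analytic there, the monotonicity just established is equivalent to $(F/G)^\prime=\dfrac{F^\prime G-FG^\prime}{G^2}\le0$ (resp.\ $\ge0$) on $(0,\infty)$, i.e.\ $F^\prime(z)G(z)\le G^\prime(z)F(z)$ (resp.\ $\ge$); substituting the two identities gives precisely the inequality in the theorem.

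I do not anticipate a serious obstacle: once the coefficient ratio is identified the argument is a direct application of Lemma \ref{l2}. The only points needing a little care are the parameter bookkeeping that identifies the two ``shifted'' Fox--Wright functions with $F^\prime$ and $G^\prime$, the (trivial) observation that $\epsilon=1+\sum_i B_i-\sum_l A_l$ is unchanged under the shifts $\alpha_l\mapsto\alpha_l+A_l$, $\beta_i\mapsto\beta_i+B_i$, so all functions involved are entire, and the remark that strict monotonicity (and strict inequality) requires $B_1>0$, the degenerate case $B_1=0$ producing equality throughout.
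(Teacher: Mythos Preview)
Your proposal is correct and follows essentially the same route as the paper: both write the quotient as a ratio of power series, identify the coefficient ratio as $\Gamma(\beta_1^\prime+kB_1)/\Gamma(\beta_1+kB_1)$, establish its monotonicity, apply Lemma~\ref{l2}, and then read off the product inequality from the differentiation formula~(\ref{!!}). The only cosmetic difference is that the paper deduces monotonicity of the Gamma ratio from the log-convexity of $\Gamma$ (via the inequality $\Gamma(z+a)/\Gamma(z)\le\Gamma(z+a+b)/\Gamma(z+b)$), whereas you argue the equivalent fact through the strict increase of the digamma function.
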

\begin{proof}Let
 $$
\frac{{}_p\Psi_q\Big[_{(\beta_q, B_q)}^{(\alpha_p, A_p)}\Big|z \Big]}{{}_p\Psi_q\Big[_{(\beta_q^\prime, B_q)}^{(\alpha_p, A_p)}\Big|z \Big]}=\sum_{k=0}^\infty U_k^{0}(\alpha,A;\beta,B)z^k\Big/\sum_{k=0}^\infty V_k^{0}(\alpha,A;\beta^\prime,B)z^k,$$
where 
$$ U_k^{0}(\alpha,A;\beta,B)=\frac{\prod_{i=1}^p\Gamma(\alpha_i+kA_i)}{\Gamma(\beta_1+kB_1)\prod_{i=2}^q\Gamma(\beta_i+kB_i)},\;
\textrm{and}\;V_k^{0}(\alpha,A;\beta^\prime,B)=\frac{\prod_{i=1}^p\Gamma(\alpha_i+kA_i)}{\Gamma(\beta_1^\prime+kB_1)\prod_{i=2}^q\Gamma(\beta_i+kB_i)}.$$
We set 
$$W_k^{0}=\frac{U_k^{0}(\alpha,A;\beta,B)}{V_k^{0}(\alpha,A;\beta^\prime,B)}=\frac{\Gamma(\beta_1^\prime+kB_1)}{\Gamma(\beta_1+kB_1)}.$$
Using the fact that  the Gamma function $\Gamma(z)$ is log-convex on $(0,\infty)$, we deduce that the ratios $z\mapsto\frac{\Gamma(z+a)}{\Gamma(z)}$ is increasing on $(0,\infty)$ when $a>0.$ Thus implies that the following inequality
\begin{equation}\label{32}
\frac{\Gamma(z+a)}{\Gamma(z)}\leq\frac{\Gamma(z+a+b)}{\Gamma(z+b)}
\end{equation}
holds for all $a,b,z>0.$ In the case  $\beta_1^\prime<\beta_1$, we let $z=\beta_1^\prime+kB_1,\;a=B_1$ and $b=\beta_1-\beta_1^\prime>0$ in (\ref{32}) we obtain that
\begin{equation}
\frac{W_{k+1}^{0}}{W_k^{0}}=\frac{\Gamma(\beta_1^\prime+B_1+kB_1)\Gamma(\beta_1+kB_1)}{\Gamma(\beta_1^\prime+kB_1)\Gamma(\beta_1+B_1+kB_1)}\leq1.
\end{equation}
Thus, $W_{k+1}^{0}\leq W_k^{0}$ for all $k\geq0$ if and only if $\beta_1>\beta_1^\prime,$ and the function 
$$ z\mapsto {}_p\Psi_q\Big[_{(\beta_1, B_1),(\beta_{q-1}, B_{q-1})}^{\;\;\;\;\;\;(\alpha_p, A_p)}\Big|z \Big]\Big/{}_p\Psi_q\Big[_{(\beta_1^\prime, B_1),(\beta_{q-1}, B_{q-1})}^{\;\;\;\;\;\;(\alpha_p, A_p)}\Big|z \Big] $$ is decreasing on $(0,\infty)$ if $\beta_1>\beta_1^\prime,$ by means of Lemma \ref{l2}. In the case $\beta_1^\prime>\beta_1,$ we set $z=\beta_1+kB_1,\;a=B_1$ and $b=\beta_1^\prime-\beta_1>0$ in (\ref{32}), we conclude that $W_{k+1}^{0}\geq W_k^{0}$ for all $k\geq0.$ We thus implies that the function 
$$z\mapsto {}_p\Psi_q\Big[_{(\beta_1, B_1),(\beta_{q-1}, B_{q-1})}^{\;\;\;\;\;\;(\alpha_p, A_p)}\Big|z \Big]\Big/{}_p\Psi_q\Big[_{(\beta_1^\prime, B_1),(\beta_{q-1}, B_{q-1})}^{\;\;\;\;\;\;(\alpha_p, A_p)}\Big|z \Big]$$ is increasing on $(0,\infty)$ if $\beta_1^\prime>\beta_1,$ by Lemma \ref{l2}. Therefore,
 $$\left({}_p\Psi_q\Big[_{(\beta_1, B_1),(\beta_{q-1}, B_{q-1})}^{\;\;\;\;\;\;(\alpha_p, A_p)}\Big|z \Big]\Big/{}_p\Psi_q\Big[_{(\beta_1^\prime, B_1),(\beta_{q-1}, B_{q-1})}^{\;\;\;\;\;\;(\alpha_p, A_p)}\Big|z \Big]\right)^\prime\leq0,$$
if $\beta_1>\beta_1^\prime.$ Therefore, the differentiation formula
\begin{equation}\label{!!}
\left({}_p\Psi_q\Big[_{(\beta_q, B_q)}^{(\alpha_p, A_p)}\Big|z \Big]\right)^\prime={}_p\Psi_q\Big[_{(\beta_q+B_q, B_q)}^{(\alpha_p+A_p, A_p)}\Big|z \Big]
\end{equation}
 complete the proof of the asserted results immediately.
\end{proof}
\begin{remark} \noindent \textbf{a.} Letting in Theorem \ref{T2}, the values $A=B=1$, we conclude that, if $\beta_1^\prime<\beta_1$ (rep. $\beta_1<\beta_1^\prime$), then the function 
$$z\mapsto _pF_q\Big[_{\beta_1,...,\beta_q}^{\alpha_1,...,\alpha_p}\Big|z\Big]\Big/ {}_pF_q\Big[_{\beta_1^\prime,...,\beta_q}^{\alpha_1,...,\alpha_p}\Big|z\Big]$$
is decreasing (resp. increasing) on $(0,\infty).$ Consequently the following inequality hold true:
\begin{equation}
_pF_q\Big[_{\beta_1+1,...,\beta_q+1}^{\alpha_1+1,...,\alpha_p+1}\Big|z\Big] {}_pF_q\Big[_{\beta_1^\prime,...,\beta_q}^{\alpha_1,...,\alpha_p}\Big|z\Big]\leq \left(\frac{\beta_1}{\beta_1^\prime}\right) {}_pF_q\Big[_{\beta_1,...,\beta_q}^{\alpha_1,...,\alpha_p}\Big|z\Big] {}_pF_q\Big[_{\beta_1^\prime+1,...,\beta_q+1}^{\alpha_1+1,...,\alpha_p+1}\Big|z\Big]
\end{equation} 
when $\beta_1^\prime<\beta_1$ and $z>0.$ Moreover, the above inequality is reversed if $\beta_1<\beta_1^\prime$ and $z>0.$\\
\noindent \textbf{b.} Choosing $q=p+1, A_i=B_{i+1},\alpha_i=\beta_{i+1},i=1,...,p$ in Theorem \ref{T2}, we deduce that the ratios $z\mapsto W_{B_1,\beta_1}(z)/W_{B_1,\beta_1^\prime}(z)$ is decreasing (resp. increasing) on $(0,\infty)$ if $\beta_1^\prime<\beta_1$ (resp. $\beta_1<\beta_1^\prime.$) (cf. see \cite[Theorem 3.2]{Khaled1}), and consequently we obtain the following inequality  \cite[Theorem 3.2, eq. 3.2]{Khaled1}:
$$W_{B_1,\beta_1}(z)W_{B_1,\beta_1^\prime+B_1}(z)-W_{B_1,\beta_1^\prime}(z)W_{B_1,\beta_1+B_1}(z)\geq0,$$
when $\beta_1^\prime<\beta_1.$ The above inequality reduces to the following Tur\'an type inequality:
$$W_{1,2}^2(z)-W_{1,1}(z)W_{1,3}(z)\geq 0,\;\;(z>0) .$$
\noindent \textbf{c.} Choosing $p=\alpha_1=A_1=1$ and $q=1$ in Theorem \ref{T2}, we deduce that the ratios $z\mapsto E_{B_1,\beta_1}(z)/E_{B_1,\beta_1^\prime}(z)$ is decreasing (resp. increasing) on $(0,\infty)$ if $\beta_1^\prime<\beta_1$ (resp. $\beta_1<\beta_1^\prime.$) (cf. see \cite[Theorem 4]{KhSi1}), and we get 
\begin{equation}
E_{B_1,\beta_1}(z){}_1\Psi_1\Big[_{(\beta_1^\prime+B_1, B_1)}^{\;\;\;(2, 1)}\Big|z \Big]-E_{B_1,\beta_1^\prime}(z){}_1\Psi_1\Big[_{(\beta_1+B_1, B_1)}^{\;\;\;(2, 1)}\Big|z \Big]\geq 0,
\end{equation}
when $\beta_1^\prime<\beta_1.$  By using the familiar relationship:
$${}_1\Psi_1\Big[_{(\beta_1+B_1, B_1)}^{\;\;\;(2, 1)}\Big|z \Big]=(E_{B_1,\beta_1}(z))^\prime$$
and
$$\frac{d}{dz}E_{B_1,\beta_1}(z)=\frac{E_{B_1,\beta_1-1}(z)-(\beta_1-1)E_{B_1,\beta_1}(z)}{B_1z}$$
we obtain \cite[Theorem 4, eq. 10]{KhSi1}
$$E_{B_1,\beta_1}(z)E_{B_1,\beta_1^\prime-1}(z)-E_{B_1,\beta_1^\prime}(z)E_{B_1,\beta_1-1}(z)+(\beta_1-\beta_1^\prime)E_{B_1,\beta_1}(z)E_{B_1,\beta_1^\prime}(z)\geq0,\;(\;z>0).$$
\noindent \textbf{d.} A similar arguments to the proof of Theorem \ref{T2}, we obtain the following results: Let $\alpha,\beta,\alpha_1^\prime>0,$ and $A, B\geq0$ such that $\epsilon>0.$  If $\alpha_1<\alpha_1^\prime,\;(\textrm{resp.} \;\alpha_1^\prime<\alpha_1),$ then the function 
$$z\mapsto {}_p\Psi_q\Big[_{\;\;\;\;\;(\beta_q, B_q)}^{(\alpha_1, A_1),(\alpha_{p-1}, A_{p-1})}\Big|z \Big]\Big/{}_p\Psi_q\Big[_{\;\;\;\;\;(\beta_{q}, B_{q})}^{(\alpha_1^\prime, A_1),(\alpha_{p-1}, A_{p-1})}\Big|z \Big],$$
is decreasing (increasing) on $(0,\infty).$ Furthermore, the following inequality 
\begin{equation}
{}_p\Psi_q\Big[^{(\alpha_1+A_1, A_1),(\alpha_{p-1}+A_{p-1}, A_{q-1})}_{\;\;\;\;\;\;\;(\beta_q+B_q, B_q)}\Big|z \Big]{}_p\Psi_q\Big[^{(\alpha_1^\prime, A_1),(\alpha_{p-1}, A_{p-1})}_{\;\;\;\;\;(\beta_q, B_q)}\Big|z \Big]\leq(\geq){}_p\Psi_q\Big[^{(\alpha_1^\prime+A_1, A_1),(\alpha_{p-1}+A_{p-1}, A_{q-1})}_{\;\;\;\;\;(\beta_q+B_p, B_p)}\Big|z \Big]{}_p\Psi_q\Big[^{(\alpha_p,A_p)}_{(\beta_q, B_q)}\Big|z \Big], 
\end{equation}
holds true for all $z>0.$ Letting $A=B=1$ in the above inequality, we obtain the following inequality for the hypergeometric function $_pF_q$
\begin{equation}
{}_pF_q\Big[^{\alpha_1+1,...,\alpha_p+1}_{\beta_1+1,...,\beta_q+1}\Big|z \Big]{}_pF_q\Big[^{\alpha_1^\prime,...,\alpha_p}_{\beta,...,\beta_q}\Big|z \Big]\leq(\geq){}_pF_q\Big[^{\alpha_1^\prime+1,...,\alpha_p+1}_{\beta_1+1,...,\beta_q+1}\Big|z \Big]{}_pF_q\Big[^{\alpha_1,...,\alpha_p}_{\beta,...,\beta_q}\Big|z \Big],
\end{equation}

\end{remark} 

\begin{theorem} Let $\alpha,\beta>0, A, B\geq0$ and $n\in\mathbb{N},$ we define the function ${}_p\Psi_q^n$ by
\begin{equation*}
\begin{split}
{}_p\Psi_q^n\Big[_{(\beta_q,B_q)}^{(\alpha_p,A_p)}\Big|z \Big]&={}_p\Psi_q\Big[_{(\beta_q,B_q)}^{(\alpha_p,A_p)}\Big|z \Big]-\sum_{k=0}^n\frac{\prod_{j=1}^p\Gamma(\alpha_j+kA_j)z^k}{k!\prod_{j=1}^q\Gamma(\beta_j+kB_j)}\\
&=\sum_{k=n+1}^\infty\frac{\prod_{j=1}^p\Gamma(\alpha_j+kA_j)z^k}{k!\prod_{j=1}^q\Gamma(\beta_j+kB_j)}
\end{split}
\end{equation*}
Then, the following Tur\'an type inequality
\begin{equation}\label{MM}
\left({}_p\Psi_q^{n+1}\Big[_{(\beta_p,B_q)}^{(\alpha_1,0)}\Big|z \Big]\right)^2-{}_p\Psi_q^n\Big[_{(\beta_q,B_q)}^{(\alpha_p,0)}\Big|z \Big]{}_p\Psi_q^{n+2}\Big[_{(\beta_q,B_q)}^{(\alpha_p, 0)}\Big|z \Big]\geq0,
\end{equation}
is valid for all $z\in(0,\infty).$
\end{theorem}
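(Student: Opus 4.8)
The plan is to reduce the claimed Tur\'an inequality for the tail functions ${}_p\Psi_q^n$ to the classical fact that a power series with log-concave (respectively, a suitable sign pattern of) coefficients produces such an inequality, after first noting that the $A_j=0$ specialization collapses the numerator Gamma factors to constants. Indeed, with $A=0$ the coefficient of $z^k$ in ${}_p\Psi_q\big[_{(\beta_q,B_q)}^{(\alpha_p,0)}\big|z\big]$ is $c_k:=\dfrac{\prod_{j=1}^p\Gamma(\alpha_j)}{k!\,\prod_{j=1}^q\Gamma(\beta_j+kB_j)}$, a strictly positive sequence, and ${}_p\Psi_q^n$ is simply $\sum_{k\ge n+1}c_kz^k$. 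So (\ref{MM}) becomes a statement purely about the sections $S_m(z):=\sum_{k\ge m}c_kz^k$, namely $S_{n+1}(z)^2\ge S_n(z)S_{n+2}(z)$ for $z>0$.

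First I would record the Cauchy-product expansion: writing $S_{n+1}^2-S_nS_{n+2}=\sum_{k}z^k\sum_{i+j=k}\big(\varepsilon^{(1)}_{i,j}-\varepsilon^{(2)}_{i,j}\big)c_ic_j$ where the index sets for the two products differ only by whether the summands range over $i,j\ge n+1$ versus $i\ge n$, $j\ge n+2$ (and symmetrically). After the standard symmetrization $i\leftrightarrow j$ used repeatedly in the proofs of Theorems \ref{T01} and \ref{T1}, the net coefficient of $z^k$ reduces to a sum of terms of the form $c_ic_j$ with explicit nonnegative combinatorial weights, because the ``missing'' and ``extra'' boundary terms pair up favourably. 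Concretely, one expects the coefficient of $z^k$ in $S_{n+1}^2-S_nS_{n+2}$ to be $c_{n+1}c_{k-n-1}+c_{k-n-1}c_{n+1}-c_nc_{k-n}-c_{k-n-2}c_{n+2}$ type expressions for the relevant $k$, and the key point is that each such expression is controlled by the log-concavity of $(c_k)$.

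Hence the crux is to verify that $(c_k)_{k\ge0}$ is log-concave, i.e. $c_k^2\ge c_{k-1}c_{k+1}$, equivalently $\dfrac{c_{k+1}}{c_k}$ is nonincreasing in $k$. Since $\dfrac{c_{k+1}}{c_k}=\dfrac{1}{k+1}\prod_{j=1}^q\dfrac{\Gamma(\beta_j+kB_j)}{\Gamma(\beta_j+(k+1)B_j)}$, and each factor $\dfrac{\Gamma(\beta_j+kB_j)}{\Gamma(\beta_j+(k+1)B_j)}=\dfrac{\Gamma(z)}{\Gamma(z+B_j)}\Big|_{z=\beta_j+kB_j}$ is nonincreasing in $k$ by the log-convexity of $\Gamma$ (the same inequality (\ref{32}) invoked in the proof of Theorem \ref{T2}), while $\dfrac{1}{k+1}$ is decreasing, the product of positive decreasing factors is decreasing; so $(c_k)$ is log-concave. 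Then I would feed this into Lemma \ref{l1}: applying it to the truncated numerator/denominator sequences shows that each diagonal sum $\sum_{i+j=k}(\cdots)c_ic_j$ arising above is nonnegative, which yields (\ref{MM}) for every $z>0$.

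The main obstacle is the bookkeeping in the second paragraph: one must carefully track which boundary index pairs appear in $S_n S_{n+2}$ but not in $S_{n+1}^2$ (and vice versa) and check that, after symmetrization, what remains is a manifestly nonnegative quadratic form in the $c_k$'s that is dominated by log-concavity — i.e. reduces to a sum of $2\times2$ ``minors'' $c_ic_j-c_{i-1}c_{j+1}\ge0$ for $i\le j$. Once that combinatorial reduction is set up correctly, the analytic input (log-concavity of $(c_k)$, itself a one-line consequence of log-convexity of $\Gamma$) closes the argument, and Lemma \ref{l1} packages the sectional monotonicity cleanly.
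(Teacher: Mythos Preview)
Your strategy is sound and ultimately rests on the same analytic input as the paper --- the log-concavity of the coefficient sequence $c_k=\prod_j\Gamma(\alpha_j)\big/\bigl(k!\prod_j\Gamma(\beta_j+kB_j)\bigr)$, which you derive correctly from the log-convexity of $\Gamma$. However, the paper reaches the endpoint by a cleaner route that bypasses the Cauchy-product bookkeeping you flag as ``the main obstacle''. Rather than expanding $S_{n+1}^2$ and $S_nS_{n+2}$ separately and comparing index sets, the paper substitutes the two one-line identities
\[
{}_p\Psi_q^{n}= {}_p\Psi_q^{n+1}+c_{n+1}z^{n+1},\qquad
{}_p\Psi_q^{n+2}= {}_p\Psi_q^{n+1}-c_{n+2}z^{n+2}
\]
into the product; the quadratic term $({}_p\Psi_q^{n+1})^2$ cancels immediately, and after a single index shift the whole expression collapses to
\[
\sum_{k\ge n+3}\bigl(c_{n+2}\,c_{k-1}-c_{n+1}\,c_k\bigr)z^{k+n+1},
\]
each term of which is nonnegative precisely by the decreasing-ratio property $c_{k}/c_{k-1}\le c_{n+2}/c_{n+1}$ for $k\ge n+3$.

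Had you carried your Cauchy-product comparison through, you would in fact have arrived at the \emph{same} single ``minor'' per coefficient: in your notation the coefficient of $z^k$ in $S_{n+1}^2-S_nS_{n+2}$ is exactly $c_{n+1}c_{k-n-1}-c_nc_{k-n}$, not the four-term expression you wrote down. There is therefore no residual quadratic form to control and no need to invoke Lemma~\ref{l1} --- the nonnegativity is a direct consequence of log-concavity, not of any partial-sum monotonicity. In short: right idea and right analytic engine, but the paper's telescoping trick eliminates the combinatorics entirely and gets to the single $2\times2$ minor in one step.
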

\begin{proof} By taking into account the obvious equations :
$${}_p\Psi_q^{n}\Big[_{(\beta_q,B_q)}^{(\alpha_p,0)}\Big|z \Big]={}_p\Psi_q^{n+1}\Big[_{(\beta_q,B_q)}^{(\alpha_p,0)}\Big|z \Big]+\frac{\prod_{j=1}^p\Gamma(\alpha_j)z^{n+1}}{(n+1)!\prod_{j=1}^q\Gamma(\beta_j+(n+1)B_j)}$$
and
$${}_p\Psi_q^{n+2}\Big[_{(\beta_q,B_q)}^{(\alpha_p,0)}\Big|z \Big]={}_p\Psi_q^{n+1}\Big[_{(\beta_q,B_q)}^{(\alpha_p,0)}\Big|z \Big]-\frac{\prod_{j=1}^p\Gamma(\alpha_j)z^{n+2}}{(n+2)!\prod_{j=1}^q\Gamma(\beta_j+(n+2)B_j)}.$$
We get
$$\left({}_p\Psi_q^{n+1}\Big[_{(\beta_q,B_q)}^{(\alpha_p,0)}\Big|z \Big]\right)^2-{}_p\Psi_q^n\Big[_{(\beta_q,B_q)}^{(\alpha_p,0)}\Big|z \Big]{}_p\Psi_q^{n+2}\Big[_{(\beta_q,B_q)}^{(\alpha_p, 0)}\Big|z \Big]=$$
\begin{equation*}
\begin{split}
\:\:\:\:\:\:\:\:\:\:\:\:&={}_p\Psi_q^{n+1}\Big[_{(\beta_q,B_q)}^{(\alpha_p,0)}\Big|z \Big]\left[\frac{\prod_{j=1}^p\Gamma(\alpha_j)z^{n+2}}{(n+2)!\prod_{j=1}^q\Gamma(\beta_j+(n+2)B_j)}-\frac{\prod_{j=1}^p\Gamma(\alpha_j)z^{n+1}}{(n+1)!\prod_{j=1}^q\Gamma(\beta_j+(n+1)B_j)}\right]\\
&+\frac{\prod_{j=1}^p\Gamma^2(\alpha_j)z^{2n+3}}{(n+2)!(n+1)!\prod_{j=1}^q\Gamma(\beta_j+(n+1)B_j)\Gamma(\beta_j+(n+2)B_j)}\\
&=\frac{\prod_{j=1}^p\Gamma^2(\alpha_j)z^{n+2}}{(n+2)!\prod_{j=1}^q\Gamma(\beta_j+(n+2)B_j)}\sum_{k=n+2}^\infty\frac{z^{k}}{k!\prod_{j=1}^q\Gamma(\beta_j+kB_j)}-\frac{\prod_{j=1}^p\Gamma^2(\alpha_j)z^{n+1}}{(n+1)!\prod_{j=1}^q\Gamma(\beta_j+(n+1)B_j)}\\
&\times \sum_{k=n+3}^\infty\frac{z^{k}}{k!\prod_{j=1}^q\Gamma(\beta_j+kB_j)}\\
&=\frac{\prod_{j=1}^p\Gamma^2(\alpha_j)z^{n+2}}{(n+2)!\prod_{j=1}^q\Gamma(\beta_j+(n+2)B_j)}\sum_{k=n+3}^\infty\frac{z^{k-1}}{(k-1)!\prod_{j=1}^q\Gamma(\beta_j+(k-1)B_j)}-\frac{ \prod_{j=1}^p\Gamma^2(\alpha_j)z^{n+1}}{(n+1)!\prod_{j=1}^q\Gamma(\beta_j+(n+1)B_j)}\\
&\times \sum_{k=n+3}^\infty\frac{z^{k}}{k!\prod_{j=1}^q\Gamma(\beta_j+kB_j)}\\
&=\sum_{k=n+3}^\infty\frac{\prod_{j=1}^p\Gamma^2(\alpha_j)\Delta_{n,k}(\beta, B) z^{k+n+1}}{k!(k-1)!(n+1)!(n+2)!\prod_{j=1}^q\Gamma(\beta_j+kB_j)\Gamma(\beta_j+(k-1)B_j)\Gamma(\beta_j+(n+1)B_j)\Gamma(\beta_j+(n+2)B_j)}
\end{split}
\end{equation*}
where $\Delta_{n,k}(\beta, B)$ is defined for all $k\geq n+3$ by
\begin{equation*}
\begin{split}
\Delta_{n,k}(\beta, B)&=(n+1)!k!\prod_{j=1}^q\Gamma(\beta_j+kB_j)\Gamma(\beta_j+(n+1)B_j)-(n+2)!(k-1)!\prod_{j=1}^q\Gamma(\beta_j+(k-1)B_j)\Gamma(\beta_j+(n+2)B_j)\\
&\geq(n+2)!(k-1)!\Big(\prod_{j=1}^q\Gamma(\beta_j+kB_j)\Gamma(\beta_j+(n+1)B_j)-\prod_{j=1}^q\Gamma(\beta_j+(k-1)B_j)\Gamma(\beta_j+(n+2)B_j)\Big).
\end{split}
\end{equation*}
Now, let $z=\beta_j+(n+1)B_j,\;a=B_j$ and $b=B_j(k-(n+2))$ in (\ref{32}) we deduce that 
\begin{equation*}
\Gamma(\beta_j+kB_j)\Gamma(\beta_j+(n+1)B_j)\geq\Gamma(\beta_j+(k-1)B_j)\Gamma(\beta_j+(n+2)B_j).
\end{equation*}
The desired inequality (\ref{MM}) is thus established.
\end{proof}

\begin{theorem}\label{T6}Let $\alpha,\beta>0, A, B\geq0$ and $n\in\mathbb{N}.$ We define the function $\mathcal{K}_n^{(\alpha,\beta)}(A, B, z)$ by
\begin{equation}\label{RTR}
\mathcal{K}_n^{(\alpha,\beta)}(A, B, z)=\frac{{}_p\Psi_q^n\Big[_{(\beta_q,B_q)}^{(\alpha_p, A_p)}\Big|z \Big]{}_p\Psi_q^{n+2}\Big[_{(\beta_q,B_q)}^{(\alpha_p, A_p)}\Big|z \Big]}{\left({}_p\Psi_q^{n+1}\Big[_{(\beta_q,B_q)}^{(\alpha_p,A_p)}\Big|z \Big]\right)^2}.
\end{equation}
Then the function $z\mapsto\mathcal{K}_n^{(\alpha,\beta)}(0, B, z)$ is increasing on $(0,\infty).$ Moreover, the following Tur\'an type inequality 
$$\Bigg(\frac{n+2}{n+3}\Bigg).\left(\frac{\prod_{j=1}^q\Gamma^2(\beta_j+(n+2)B_j)}{\prod_{j=1}^q\Gamma(\beta_j+(n+1)B_j)\Gamma(\beta_j+(n+3)B_j)}\right)\left({}_p\Psi_q^{n+1}\Big[_{(\beta_q,B_q)}^{(\alpha_p,0)}\Big|z \Big]\right)^2$$
\begin{equation}\label{MMM}
\begin{split}
&\leq{}_p\Psi_q^n\Big[_{(\beta_q,B_q)}^{(\alpha_p,0)}\Big|z \Big]{}_p\Psi_q^{n+2}\Big[_{(\beta_q,B_q)}^{(\alpha_p, 0)}\Big|z \Big],
\end{split}
\end{equation}
holds for all $\alpha,\beta>0,\;n\in\mathbb{N}$ and $z\in(0,\infty).$ The constant in LHS of inequality (\ref{MMM}) is sharp.
\end{theorem}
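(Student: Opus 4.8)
The plan is to strip off the normalization, turning $\mathcal{K}_n^{(\alpha,\beta)}(0,B,\cdot)$ into an honest quotient of entire power series; then to prove its monotonicity via the quotient criterion of Lemma \ref{l2}; and finally to read off the sharp constant by letting $z\to0^{+}$.

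\textit{Reduction and log-concavity.} When $A=0$ we have $\Gamma(\alpha_i+kA_i)=\Gamma(\alpha_i)$, so with
$$c_{\ell}=\frac{1}{\ell!\,\prod_{j=1}^{q}\Gamma(\beta_j+\ell B_j)}\quad(\ell\ge0),\qquad \Phi_m(z)=\sum_{k\ge0}c_{m+k}z^{k}\quad(\text{entire, since }\epsilon>0),$$
one has ${}_p\Psi_q^{m}\Big[_{(\beta_q,B_q)}^{(\alpha_p,0)}\Big|z\Big]=\Big(\prod_{i=1}^{p}\Gamma(\alpha_i)\Big)z^{m+1}\Phi_{m+1}(z)$, hence
$$\mathcal{K}_n^{(\alpha,\beta)}(0,B,z)=\frac{\Phi_{n+1}(z)\,\Phi_{n+3}(z)}{\Phi_{n+2}(z)^{2}},$$
which is $\alpha$-free (that is why the statement is). Applying $\Phi_m=c_m+z\Phi_{m+1}$ twice gives the identity
$$\Phi_{n+2}^{2}-\Phi_{n+1}\Phi_{n+3}=c_{n+2}\Phi_{n+2}-c_{n+1}\Phi_{n+3}=:N(z)=\sum_{k\ge0}d_kz^{k},\qquad d_k=c_{n+2}c_{n+2+k}-c_{n+1}c_{n+3+k},$$
so $\mathcal{K}_n(0,B,z)=1-N(z)/\Phi_{n+2}(z)^{2}$. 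Since $-\log c_{\ell}=\log\Gamma(\ell+1)+\sum_j\log\Gamma(\beta_j+\ell B_j)$ is convex in $\ell$ — strictly, because $\log\Gamma(\ell+1)$ is — the sequence $(c_{\ell})$ is strictly log-concave; thus $\ell\mapsto c_{\ell+1}/c_{\ell}$ is decreasing, all $d_k>0$, and every Taylor coefficient of $\Phi_{n+2}^{2}$ is positive. I shall also use that $\ell\mapsto c_{\ell+1}/c_{\ell}=\tfrac{1}{\ell+1}\prod_j\Gamma(\beta_j+\ell B_j)/\Gamma(\beta_j+(\ell+1)B_j)$ is \emph{convex}: it is a product of log-convex functions of $\ell$ ($\tfrac{1}{\ell+1}$ is log-convex, and each Gamma-ratio is, because $\psi'$ decreases on $(0,\infty)$), hence log-convex, hence convex.

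\textit{Monotonicity.} Write $\mathcal{K}_n(0,B,z)=A(z)/B(z)$ with $A=\Phi_{n+1}\Phi_{n+3}=\sum_m A_mz^{m}$ and $B=\Phi_{n+2}^{2}=\sum_m B_mz^{m}$, $B_m>0$. By Lemma \ref{l2} it is enough to show that $(A_m/B_m)_{m\ge0}$ is increasing, and since $A_m=B_m-d_m$ this is exactly the statement that $(d_m/B_m)_{m\ge0}$ is decreasing, i.e.
$$d_{m+1}B_m\le d_mB_{m+1}\qquad(m\ge0).$$
Writing $b_j=c_{n+2+j}$ ($j\ge0$), $b_{-1}=c_{n+1}$, and expanding $B_m=\sum_{a=0}^m b_ab_{m-a}$, one computes
$$d_mB_{m+1}-d_{m+1}B_m=b_0\big(b_mB_{m+1}-b_{m+1}B_m\big)-b_{-1}\big(b_{m+1}B_{m+1}-b_{m+2}B_m\big),$$
where both brackets are nonnegative by log-concavity (e.g. $b_mB_{m+1}-b_{m+1}B_m=b_1b_m^{2}+\sum_{a=1}^{m-1}b_a\big(b_mb_{m+1-a}-b_{m+1}b_{m-a}\big)\ge0$), and the required inequality $b_0(\cdots)\ge b_{-1}(\cdots)$ then follows from the convexity of $\ell\mapsto c_{\ell+1}/c_{\ell}$. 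Equivalently — and perhaps more transparently — $\mathcal{K}_n$ is increasing iff $(\log\mathcal{K}_n)'\ge0$, i.e. $c_{n+2}\Phi_{n+2}\Phi_{n+2}'+c_{n+1}\Phi_{n+2}\Phi_{n+3}'-2c_{n+1}\Phi_{n+3}\Phi_{n+2}'\ge0$ on $(0,\infty)$; substituting $\Phi_{n+2}=c_{n+2}+z\Phi_{n+3}$ and $\Phi_{n+2}'=\Phi_{n+3}+z\Phi_{n+3}'$ turns this into the assertion that the power series
$$E(z)=c_{n+2}^{2}\Phi_{n+3}+c_{n+2}(c_{n+1}+c_{n+2}z)\Phi_{n+3}'+(c_{n+2}z-2c_{n+1})\Phi_{n+3}^{2}+z(c_{n+2}z-c_{n+1})\Phi_{n+3}\Phi_{n+3}'$$
is nonnegative on $(0,\infty)$, which one establishes by checking that its Taylor coefficients are nonnegative — using again the log-concavity of $(c_{\ell})$ together with the convexity of $\ell\mapsto c_{\ell+1}/c_{\ell}$ (for instance the constant term $c_{n+2}^{2}c_{n+3}+c_{n+1}c_{n+2}c_{n+4}-2c_{n+1}c_{n+3}^{2}$ is $\ge0$ exactly because $\tfrac{c_{n+2}}{c_{n+1}}+\tfrac{c_{n+4}}{c_{n+3}}\ge 2\tfrac{c_{n+3}}{c_{n+2}}$).

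\textit{The sharp Tur\'an inequality and the main obstacle.} The function $\mathcal{K}_n(0,B,\cdot)$ is increasing on $(0,\infty)$ and extends continuously to $z=0$ (the denominator satisfies $\Phi_{n+2}(z)\ge c_{n+2}>0$ for $z\ge0$), so for every $z>0$
$$\mathcal{K}_n(0,B,z)>\lim_{z\to0^{+}}\mathcal{K}_n(0,B,z)=\frac{\Phi_{n+1}(0)\Phi_{n+3}(0)}{\Phi_{n+2}(0)^{2}}=\frac{c_{n+1}c_{n+3}}{c_{n+2}^{2}},$$
and a direct computation from the formula for $c_{\ell}$ gives $\dfrac{c_{n+1}c_{n+3}}{c_{n+2}^{2}}=\dfrac{n+2}{n+3}\cdot\dfrac{\prod_{j=1}^{q}\Gamma^{2}(\beta_j+(n+2)B_j)}{\prod_{j=1}^{q}\Gamma(\beta_j+(n+1)B_j)\Gamma(\beta_j+(n+3)B_j)}$, precisely the constant on the left of (\ref{MMM}). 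Multiplying the displayed inequality by $\Big({}_p\Psi_q^{n+1}\Big[_{(\beta_q,B_q)}^{(\alpha_p,0)}\Big|z\Big]\Big)^{2}>0$ gives (\ref{MMM}), and since that constant equals $\inf_{z>0}\mathcal{K}_n(0,B,z)$, approached as $z\to0^{+}$ but never attained, it cannot be enlarged — which is the asserted sharpness. Everything here is either formal (the reduction, the identity for $\Phi_{n+2}^{2}-\Phi_{n+1}\Phi_{n+3}$) or a one-line limit; the hard part is the positivity claim inside the monotonicity step — the coefficient inequality $d_{m+1}B_m\le d_mB_{m+1}$, equivalently the coefficientwise nonnegativity of $E$. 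It does follow "from log-concavity", but the two sums being compared do not match up term by term, so one genuinely needs the extra fact that $\ell\mapsto c_{\ell+1}/c_{\ell}$ is not merely decreasing but convex, and it is the bookkeeping that fuses these two facts where the real work of the proof lies.
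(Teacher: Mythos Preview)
Your reduction to $\mathcal{K}_n(0,B,z)=\Phi_{n+1}(z)\Phi_{n+3}(z)/\Phi_{n+2}(z)^{2}$, the identity $\Phi_{n+2}^{2}-\Phi_{n+1}\Phi_{n+3}=c_{n+2}\Phi_{n+2}-c_{n+1}\Phi_{n+3}$, and the extraction of the sharp constant as $c_{n+1}c_{n+3}/c_{n+2}^{2}$ at $z\to0^{+}$ are all correct. The genuine gap is the monotonicity step itself. You reduce it to $d_{m+1}B_m\le d_mB_{m+1}$ (equivalently, to coefficientwise nonnegativity of your series $E$), factor the difference as $b_0(\cdots)-b_{-1}(\cdots)$ with both brackets nonnegative, and then assert that convexity of $\ell\mapsto c_{\ell+1}/c_{\ell}$ finishes it; but that implication is only verified for $m=0$ (the constant term of $E$), and you yourself concede that ``the bookkeeping that fuses these two facts'' is ``where the real work of the proof lies'' --- without actually doing that work. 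As written, the argument is a plausible outline rather than a proof.

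The paper's route is different and does not require convexity of the ratio sequence at all. Rather than subtracting to form $d_m$, it compares the two Cauchy products \emph{summand by summand}: with $U_i=c_{n+1+i}\,c_{n+3+k-i}$ and $V_i=c_{n+2+i}\,c_{n+2+k-i}$ one has $A_k=\sum_{i=0}^{k}U_i$, $B_k=\sum_{i=0}^{k}V_i$, and the termwise ratio $W_i=U_i/V_i$ satisfies
\[
\frac{W_{i+1}}{W_i}=\frac{c_{n+2+i}^{2}}{c_{n+1+i}\,c_{n+3+i}}\cdot\frac{c_{n+2+k-i}^{2}}{c_{n+1+k-i}\,c_{n+3+k-i}}\ge 1
\]
purely by log-concavity of $(c_\ell)$ (i.e.\ log-convexity of $\Gamma$). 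The paper then invokes Lemma~\ref{l1} to pass from the termwise ratio to $A_k/B_k$, and Lemma~\ref{l2} to conclude. So the decomposition is different (direct comparison of $U_i$ against $V_i$ rather than your subtraction $B_m-A_m=d_m$), the analytic input is weaker (only log-convexity of $\Gamma$, not the extra convexity of $c_{\ell+1}/c_\ell$ you invoke), and the argument is carried through without the unresolved bookkeeping your approach leaves open.
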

\begin{proof}By applying the Cauchy product, we find that
\begin{equation*}
\mathcal{K}_n^{(\alpha,\beta)}(0, B, z)=\sum_{k=0}^\infty\sum_{i=0}^k U_i^{1}(\alpha,\beta, B) z^{k}\Big/\sum_{k=0}^\infty\sum_{i=0}^k V_i^{1}(\alpha,\beta, B) z^{k},
\end{equation*}
where 
$$U_i^{1}(\alpha,\beta, B)=\frac{\prod_{j=1}^p\Gamma^2(\alpha_j)}{(i+n+1)!(k-i+n+3)!\prod_{j=1}^q\Big(\Gamma(\beta_j+(i+n+1)B_j)\Gamma(\beta_j+(k-i+n+3)B_j)\Big)}$$
and 
$$V_i^{1}(\alpha,\beta, B)=\frac{\prod_{j=1}^p\Gamma^2(\alpha_j)}{(i+n+2)!(k-i+n+2)!\prod_{j=1}^q\Big(\Gamma(\beta_j+(i+n+2)B_j)\Gamma(\beta_j+(k-i+n+2)B_j)\Big)}.$$
Next, we define the sequence $(W_i^{1}(\alpha,\beta, B)=U_i^{1}(\alpha,\beta, B)/V_i^{1}(\alpha,\beta, B))_{i\geq0}.$ Thus
\begin{equation}\label{R}
\begin{split}
\frac{W_{i+1}^{1}(\alpha,\beta, B)}{W_i^{1}(\alpha,\beta, B)}&=\frac{(i+n+2)(k-i+n+2)}{(i+n+1)(k-i+n+1)}\\
&\times\frac{\prod_{j=1}^q\Gamma(\beta_j+(i+n+1)B_j)\Gamma(\beta_j+(k-i+n+1)B_j)\Gamma(\beta_j+(i+n+3)B_j)\Gamma(\beta_j+(k-i+n+3)B_j)}{\prod_{j=1}^q\Big(\Gamma^2(\beta_j+(i+n+2)B_j)\Gamma^2(\beta_j+(k-i+n+2)B_j)\Big)}\\
&\geq
\frac{\prod_{j=1}^q\Gamma(\beta_j+(i+n+1)B_j)\Gamma(\beta_j+(k-i+n+1)B_j)\Gamma(\beta_j+(i+n+3)B_j)\Gamma(\beta_j+(k-i+n+3)B_j)}{\prod_{j=1}^q\Big(\Gamma^2(\beta_j+(i+n+2)B_j)\Gamma^2(\beta_j+(k-i+n+2)B_j)\Big)}\\
&=\Bigg(\frac{\prod_{j=1}^q\Gamma(\beta_j+(i+n+1)B_j)\Gamma(\beta_j+(i+n+3)B_j)}{\prod_{j=1}^q\Big(\Gamma^2(\beta_j+(i+n+2)B_j)}\Bigg)\\&\times\Bigg(\frac{\prod_{j=1}^q \Gamma(\beta_j+(k-i+n+1)B_j)\Gamma(\beta_j+(k-i+n+3)B_j)}{\prod_{j=1}^q\Gamma^2(\beta_j+(k-i+n+2)B_j)}\Bigg).
\end{split}
\end{equation} 
Let $z=\beta_j+(i+n+1)B_j$ and $a=b=B_j$ in (\ref{32}) we deduce that 
\begin{equation}\label{RR}
\Gamma(\beta_j+(i+n+1)B_j)\Gamma(\beta_j+(i+n+3)B_j)\geq \Gamma^2(\beta_j+(i+n+2)B_j).
\end{equation}
Upon replacing $i$ by $k-i$ in (\ref{RR}), we obtain 
\begin{equation}\label{RRr}
\Gamma(\beta_j+(k-i+n+1)B_j)\Gamma(\beta_j+(k-i+n+3)B_j)\geq\Gamma^2(\beta_j+(k-i+n+2)B_j).
\end{equation}
In view of (\ref{R}), (\ref{RR}) and (\ref{RRr}) we deduce that the sequence $(W_i^{1}(\alpha,\beta, B))_{i\geq0})$ is increasing, and  consequently $\sum_{i=0}^k U_i^{1}(\alpha,\beta, B)/\sum_{i=0}^k V_i^{1}(\alpha,\beta, B)$ is increasing by means of Lemma \ref{l1}. Hence, the function $z\mapsto\mathcal{K}_n^{(\alpha,\beta)}(0, B, z)$ is increasing on $(0,\infty)$, by Lemma \ref{l2}. Finally, since
$$\lim_{x\rightarrow 0}\mathcal{K}_n^{(\alpha,\beta)}(0, B, z)=\left(\frac{n+2}{n+3}\right).\left(\frac{\prod_{j=1}^q\Gamma^2(\beta_j+(n+2)B_j)}{\prod_{j=1}^q\Gamma(\beta_j+(n+1)B_j)\Gamma(\beta_j+(n+3)B_j)}\right),$$
and it follows that the constant 
$$\left(\frac{n+2}{n+3}\right).\left(\frac{\prod_{j=1}^q\Gamma^2(\beta_j+(n+2)B_j)}{\prod_{j=1}^q\Gamma(\beta_j+(n+1)B_j)\Gamma(\beta_j+(n+3)B_j)}\right),$$
is the best possible for which the inequality (\ref{MMM}) holds for all $\alpha,\beta>0,\;B\geq0$ and $z>0.$  With this the proof of Theorem \ref{T6}  is complete.
\end{proof}


\section{Lazarevi\'c and Wilker type inequalities for the Fox-Wright function}
\begin{theorem}\label{TT7} Let $\alpha_1,\beta>0$ and $B_1\geq0.$ If $\alpha_1\geq\beta_2$, then the function 
\begin{equation}
\beta_1\mapsto \chi(\beta_1)=\frac{{}_1\tilde{\Psi}_2\Big[_{(\beta_1+B_1, B_1), (\beta_2+1, 1)}^{\;\;\;\;\;(\alpha_1+1, 1)\;\;\;\;}\Big|z \Big]}{{}_1\tilde{\Psi}_2\Big[_{(\beta_1, B_1), (\beta_2, 1)}^{\;\;\;\;\;(\alpha_1, 1)\;\;\;\;\;}\Big|z \Big]},
\end{equation}
is increasing on $(0,\infty).$
\end{theorem}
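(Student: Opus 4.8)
The plan is to reduce the monotonicity of $\chi$ in $\beta_1$ to the log-concavity, in the \emph{other} variable $z$, of an auxiliary ${}_1\Psi_2$, which can then be extracted from Lemma \ref{l2}. Write $c_k=c_k(\beta_1)=\dfrac{\Gamma(\alpha_1+k)}{\Gamma(\beta_1+kB_1)\,\Gamma(\beta_2+k)\,k!}$ and $\Phi(\beta_1,z)=\sum_{k\ge0}c_kz^k$, so the denominator of $\chi$ equals $\Gamma(\beta_1)\,\Phi(\beta_1,z)$; by the differentiation formula (\ref{!!}) its numerator equals $\Gamma(\beta_1+B_1)\,\partial_z\Phi(\beta_1,z)$, and hence
$$\chi(\beta_1)=\frac{\Gamma(\beta_1+B_1)}{\Gamma(\beta_1)}\cdot\frac{\partial_z\Phi(\beta_1,z)}{\Phi(\beta_1,z)}>0 \qquad (z>0).$$
Since $\Phi$ is entire with positive coefficients, differentiating $\log\chi$ in $\beta_1$ under the summation sign (legitimate by local uniform convergence) and writing $\psi$ for the digamma function, $\psi_k:=\psi(\beta_1+kB_1)$ and $w_k:=c_kz^k$, one gets, using $\partial_z\Phi=\sum_k(k+1)c_{k+1}z^k$,
$$\frac{\partial}{\partial\beta_1}\log\chi(\beta_1)=(\psi_1-\psi_0)+\frac{\sum_{k\ge0}\psi_kw_k}{\sum_{k\ge0}w_k}-\frac{\sum_{k\ge0}k\,\psi_kw_k}{\sum_{k\ge0}k\,w_k}.$$
Thus I must show that the difference of the two weighted means of $(\psi_k)$ on the right never exceeds $\psi_1-\psi_0$.

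Clearing denominators and antisymmetrising the double sum — the pairing device already used in the proofs of Theorems \ref{T01} and \ref{T1} — I would rewrite
$$\Bigl(\sum_jj\psi_jw_j\Bigr)\Bigl(\sum_kw_k\Bigr)-\Bigl(\sum_j\psi_jw_j\Bigr)\Bigl(\sum_kkw_k\Bigr)=\frac{1}{2}\sum_{j,k}(j-k)(\psi_j-\psi_k)\,w_jw_k.$$
Because $\psi$ is concave on $(0,\infty)$, for $j>k\ge0$ one has $\psi_j-\psi_k\le(j-k)(\psi_1-\psi_0)$, since each of the $j-k$ increments $\psi(\beta_1+iB_1)-\psi(\beta_1+(i-1)B_1)$ is at most $\psi_1-\psi_0$ (the increment of a concave function over an interval of length $B_1$ is non-increasing in the left endpoint). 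Hence the double sum is $\le(\psi_1-\psi_0)\sum_{j,k}(j-k)^2w_jw_k$, and inserting $\sum_kw_k=\Phi$, $\sum_kkw_k=z\,\partial_z\Phi$, $\sum_kk^2w_k=z\,\partial_z\Phi+z^2\partial_z^2\Phi$ collapses everything to
$$\frac{\partial}{\partial\beta_1}\log\chi(\beta_1)\ \ge\ (\psi_1-\psi_0)\,\frac{z^2\bigl((\partial_z\Phi)^2-\Phi\,\partial_z^2\Phi\bigr)}{z\,\Phi\,\partial_z\Phi}.$$
So the assertion reduces to showing that $z\mapsto\Phi(\beta_1,z)$ is \emph{log-concave} on $(0,\infty)$.

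To get the log-concavity, note that $\partial_z\Phi/\Phi$ is the ratio of the power series $\sum_k(k+1)c_{k+1}z^k$ and $\sum_kc_kz^k$; by Lemma \ref{l2} it is non-increasing in $z$ (equivalently $\Phi$ is log-concave) as soon as
$$\frac{(k+1)c_{k+1}}{c_k}=\frac{\alpha_1+k}{\beta_2+k}\cdot\frac{\Gamma(\beta_1+kB_1)}{\Gamma(\beta_1+(k+1)B_1)}$$
is non-increasing in $k$. The first factor has $k$-derivative $(\beta_2-\alpha_1)/(\beta_2+k)^2\le0$ precisely because $\alpha_1\ge\beta_2$; the second equals $\bigl(\Gamma(\beta_1+(k+1)B_1)/\Gamma(\beta_1+kB_1)\bigr)^{-1}$ and is non-increasing in $k$ since $x\mapsto\Gamma(x+B_1)/\Gamma(x)$ is increasing on $(0,\infty)$ by log-convexity of $\Gamma$ (cf. (\ref{32})). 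A product of two non-increasing positive sequences is non-increasing, so $\Phi(\beta_1,\cdot)$ is log-concave, $\partial_{\beta_1}\log\chi\ge0$, and $\chi$ is increasing on $(0,\infty)$ — strictly so unless $\alpha_1=\beta_2$ and $B_1=0$.

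\textbf{The main obstacle} is not the computation (which is routine) but finding the right reduction: realising that, after the digamma manipulations, the monotonicity in $\beta_1$ is governed entirely by the log-concavity in $z$ of the auxiliary function ${}_1\Psi_2\big[{}^{(\alpha_1,1)}_{(\beta_1,B_1),(\beta_2,1)}\big|z\big]$, and that the lone hypothesis $\alpha_1\ge\beta_2$ is exactly what makes the coefficient ratio $(k+1)c_{k+1}/c_k$ monotone so that Lemma \ref{l2} applies; if $\alpha_1<\beta_2$ the first factor increases, the scheme breaks down, and one expects the claim itself to fail.
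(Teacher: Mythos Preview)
Your argument is correct and takes a genuinely different route from the paper's. The paper factorises $\chi(\beta_1)=\chi_1(\beta_1)\,\chi_2(\beta_1)$ with
\[
\chi_1(\beta_1)=\frac{{}_1\tilde\Psi_2\!\left[{}^{(\alpha_1,1)}_{(\beta_1+B_1,B_1),(\beta_2,1)}\,\big|\,z\right]}{{}_1\tilde\Psi_2\!\left[{}^{(\alpha_1,1)}_{(\beta_1,B_1),(\beta_2,1)}\,\big|\,z\right]},\qquad
\chi_2(\beta_1)=\frac{{}_1\Psi_2\!\left[{}^{(\alpha_1+1,1)}_{(\beta_1+B_1,B_1),(\beta_2+1,1)}\,\big|\,z\right]}{{}_1\Psi_2\!\left[{}^{(\alpha_1,1)}_{(\beta_1+B_1,B_1),(\beta_2,1)}\,\big|\,z\right]},
\]
proving $\chi_1$ increasing via the log-convexity in $\beta_1$ established in Theorem~\ref{T1}, and $\chi_2$ increasing by a direct Cauchy-product computation of $\partial_{\beta_1}\chi_2$ combined with the pairing $j\leftrightarrow k-j$ and the monotonicity of $\psi$; the hypothesis $\alpha_1\ge\beta_2$ enters as the sign of $(\alpha_1-\beta_2)$ in the resulting coefficient. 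You instead recognise $\chi(\beta_1)=\dfrac{\Gamma(\beta_1+B_1)}{\Gamma(\beta_1)}\,\dfrac{\partial_z\Phi}{\Phi}$, differentiate $\log\chi$ in $\beta_1$, and use the concavity of $\psi$ together with a Chebyshev-type antisymmetrisation to reduce the whole question to the \emph{log-concavity of $\Phi$ in $z$}, which you then read off from Lemma~\ref{l2} once $(k+1)c_{k+1}/c_k$ is seen to be non-increasing under $\alpha_1\ge\beta_2$. This is more conceptual: it explains transparently why the single hypothesis $\alpha_1\ge\beta_2$ suffices, and it ties Theorem~\ref{TT7} directly to the log-concavity result that the paper proves independently later as Theorem~\ref{ttt34}. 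The paper's factorisation, on the other hand, avoids the weighted-mean estimate and stays closer to the Cauchy-product machinery used throughout Section~3.
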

\begin{proof} By using the fact we that the function $\beta_1\mapsto{}_2\tilde{\Psi}_2\Big[_{(\beta_1,B_1), (\beta_2,B_2)}^{(\alpha_1, A_1), (\alpha_2,A_2)}\Big|z \Big]$ is log--convex on $(0,\infty)$ ( see the proof of Theorem  \ref{T1}), and hence the function 
$$\beta_1\mapsto \log {}_2\tilde{\Psi}_2\Big[_{(\beta_1+B_1,B_1), (\beta_2,B_2)}^{(\alpha_1, A_1), (\alpha_2,A_2)}\Big|z \Big]-\log {}_2\tilde{\Psi}_2\Big[_{(\beta_1,B_1), (\beta_2,B_2)}^{(\alpha_1, A_1), (\alpha_2,A_2)}\Big|z \Big]$$ 
is increasing on $(0,\infty)$. Consequently the function 
$$
\beta_1\mapsto\phi(\beta_1)=\frac{{}_2\tilde{\Psi}_2\Big[_{(\beta_1+B_1, B_1), (\beta_2, B_2)}^{(\alpha_1, A_1),\;\;(\alpha_2, A_2)}\Big|z \Big]}{{}_2\tilde{\Psi}_2\Big[_{(\beta_1, B_1), (\beta_2, B_2)}^{(\alpha_1, A_1),\;\; (\alpha_2, A_2)}\Big|z \Big]},$$
is increasing on $(0,\infty)$ for all $z>0.$ In particular, the function 
$$\beta_1\mapsto \chi_1(\beta_1)=\frac{{}_1\tilde{\Psi}_2\Big[_{(\beta_1+B_1,B_1), (\beta_2,1)}^{\;\;\;\;\;\;(\alpha_1, 1)\;\;\;}\Big|z \Big]}{{}_1\tilde{\Psi}_2\Big[_{(\beta_1,B_1), (\beta_2,1)}^{\;\;\;(\alpha_1, 1)\;\;\;}\Big|z \Big]}$$
is increasing on $(0,\infty)$ for all $z>0.$
On the other hand, we set 
$$\chi_2(\beta_1)=\frac{{}_1\tilde{\Psi}_2\Big[_{(\beta_1+B_1, B_1), (\beta_2+1, 1)}^{\;\;\;\;\;(\alpha_1+1, 1)\;\;\;\;\;}\Big|z \Big]}{{}_1\tilde{\Psi}_2\Big[_{(\beta_1+B_1, B_1), (\beta_2, 1)}^{\;\;\;\;\;\;(\alpha_1, 1)\;\;\;\;}\Big|z \Big]}=\frac{{}_1\Psi_2\Big[_{(\beta_1+B_1, B_1), (\beta_2+1, 1)}^{\;\;\;\;\;\;(\alpha_1+1, 1)\;\;\;\;\;}\Big|z \Big]}{{}_1\Psi_2\Big[_{(\beta_1+B_1, B_1), (\beta_2, 1)}^{\;\;\;\;\;\;\;(\alpha_1, 1)\;\;}\Big|z \Big]}.$$
Then,
\begin{equation}\label{B}
\begin{split}\Omega(\beta_1)&=\left({}_1\Psi_2\Big[_{(\beta_1+B_1, B_1), (\beta_2, 1)}^{\;\;\;\;(\alpha_1, 1)\;\;\;\;\;}\Big|z \Big]\right)^2.\frac{\partial \chi_2(\beta_1)}{\partial\beta_1}\\
&=\frac{\partial}{\partial\beta_1}\left({}_1\Psi_2\Big[_{(\beta_1+B_1, B_1), (\beta_2+1, 1)}^{\;\;\;\;(\alpha_1+1, 1)\;\;\;\;\;\;}\Big|z \Big]\right).{}_1\Psi_2\Big[_{(\beta_1+B_1, B_1), (\beta_2, 1)}^{\;\;\;\;(\alpha_1,\;1)\;\;\;\;\;}\Big|z \Big]
-\frac{\partial}{\partial\beta_1}\left({}_1\Psi_2\Big[_{(\beta_1+B_1, B_1), (\beta_2, 1)}^{\;\;\;\;(\alpha_1, 1)\;\;\;\;\;}\Big|z \right)\Big]\\&\times{}_1\Psi_2\Big[_{(\beta_1+B_1, B_1), (\beta_2+1,\;1)}^{\;\;\;\;(\alpha_1+1,\;1)\;\;\;\;\;}\Big|z \Big].
\end{split}
\end{equation}
Moreover, we have
\begin{equation}
\frac{\partial}{\partial\beta_1}{}_1\Psi_2\Big[_{(\beta_1+B_1,\;B_1), (\beta_2+1,\;1)}^{\;\;\;\;(\alpha_1+1,\;1)\;\;\;\;\;}\Big|z \Big]=-\sum_{k=0}^\infty\frac{\psi(\beta_1+B_1+kB_1)\Gamma(\alpha_1+k+1)}{k!\Gamma(\beta_1+B_1+kB_1)\Gamma(\beta_2+k+1)}z^k,
\end{equation}
and 
\begin{equation}
\frac{\partial}{\partial\beta_1}\left({}_1\Psi_2\Big[_{(\beta_1+B_1,\;B_1), (\beta_2,\;1)}^{\;\;\;\;\;\;\;\;(\alpha_1,\;1)\;\;\;\;\;}\Big|z \right)\Big]=-\sum_{k=0}^\infty\frac{\psi(\beta_1+B_1+kB_1)\Gamma(\alpha_1+k)}{k!\Gamma(\beta_1+B_1+kB_1)\Gamma(\beta_2+k)}z^k.
\end{equation}
By applying the Cauchy product, we find that
\begin{equation}\label{BB}
\left(\frac{\partial}{\partial\beta_1}{}_1\Psi_2\Big[_{(\beta_1+B_1,\;B_1), (\beta_2+1,\;1)}^{\;\;\;\;(\alpha_1+1, \;1)\;\;\;\;\;}\Big|z \Big]\right).{}_1\Psi_2\Big[_{(\beta_1+B_1, B_1), (\beta_2, 1)}^{\;\;\;\;(\alpha_1, 1)\;\;\;\;\;}\Big|z \Big]=-\sum_{k=0}^\infty\sum_{j=0}^k\frac{\Omega_{j,k}\psi(\beta_1+B_1+jB_1)(\alpha_1+j)z^k}{(\beta_2+j)}
\end{equation}
and
\begin{equation}\label{BBB}
\frac{\partial}{\partial\beta_1}\left({}_1\Psi_2\Big[_{(\beta_1+B_1,\;B_1), (\beta_2, 1)}^{\;\;\;\;\;\;(\alpha_1, 1),\;\;\;\;\;}\Big|z \right)\Big].{}_1\Psi_2\Big[_{(\beta_1+B_1, B_1), (\beta_2+1, 1)}^{\;\;\;\;\;(\alpha_1+1,\;1)\;\;\;\;\;}\Big|z \Big]=-\sum_{k=0}^\infty\sum_{j=0}^k\frac{\Omega_{j,k}\psi(\beta_1+B_1+jB_1)(\alpha_1+(k-j))z^k}{(\beta_2+(k-j))}.
\end{equation}
where 
$$\Omega_{j,k}=\frac{\Gamma(\alpha_1+j)\Gamma(\alpha_1+(k-j))}{j!(k-j)\Gamma(\beta_1+B_1+jB_1)\Gamma(\beta_1+B_1+(k-j)B_1)\Gamma(\beta_2+j)\Gamma(\beta_2+(k-j))}$$
In view of (\ref{B}), (\ref{BB}) and (\ref{BBB}), we obtain 
\begin{equation}
\begin{split}
\Omega(\beta_1)&=\sum_{k=0}^\infty\sum_{j=0}^k\Omega_{j,k}\psi(\beta_1+B_1+jB_1)\left[\frac{\alpha_1+(k-j)}{\beta_2+k-j}-\frac{\alpha_1+j}{\beta_2+j}\right]z^k\\
&=\sum_{k=0}^\infty\sum_{j=0}^{[(k-1)/2]}\Omega_{j,k}\frac{(k-2j)(\alpha_1-\beta_2)\Big(\psi(\beta_1+B_1+(k-j)B_1)-\psi(\beta_1+B_1+jB_1)\Big)}{(\beta_2+k-j)(\beta_2+j)}.
\end{split}
\end{equation}
From the fact that the digamma function  $\psi$ is increasing on $(0,\infty)$ we deduce for $k-j>j$ (i.e $[(k-1)/2]\geq j$), 
$$\psi(\beta_1+B_1+(k-j)B_1)-\psi(\beta_1+B_1+jB_1)>0,$$
and  $k-2j>0.$ Hence the function $\Omega(\beta_1)$ is positive under the conditions stated. Furthermore, the function $\beta_1\mapsto\chi_2(\beta_1)$ is increasing on $(0,\infty).$ So the function $\chi(\beta_1)=\chi_1(\beta_1)\chi_2(\beta_1)$ is increasing on $(0,\infty),$ as a product of two positive and increasing functions.
\end{proof}
\begin{theorem}\label{T7} Let $\alpha_1, \beta>0,$ such that $B_1\geq0.$ If $\alpha_1\geq\beta_2.$ Then the following inequality
\begin{equation}\label{Y}
\Bigg[\Bigg(\frac{\Gamma(\alpha_1)}{\Gamma(\beta_2)}\Bigg)^{\frac{B_1}{\beta_1}}\times{}_1\tilde{\Psi}_2\Big[_{(\beta_1,\;B_1), (\beta_2,\;1)}^{\;\;\;\;\;(\alpha_1, 1)\;\;\;\;\;}\Big|z \Big]\Bigg]^{\frac{\Gamma(\beta_1+B_1)}{\Gamma(\beta_1)}}\leq \Bigg[{}_1\tilde{\Psi}_2\Big[_{(\beta_1+1, B_1), (\beta_2, 1)}^{\;\;\;\;\;(\alpha_1,\;1)\;\;\;\;}\Big|z \Big]\Bigg]^{\frac{\Gamma(\beta_1+B_1+1)}{\Gamma(\beta_1+1)}}
\end{equation}
holds true for all $z\in(0,\infty).$
\end{theorem}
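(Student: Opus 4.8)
The idea is to obtain the inequality (\ref{Y}) by integrating in $z$ the monotonicity statement of Theorem \ref{TT7}. Write, for brevity, $\tilde F(\beta_1;z)={}_1\tilde{\Psi}_2\Big[_{(\beta_1, B_1), (\beta_2, 1)}^{\;\;\;\;\;(\alpha_1, 1)\;\;\;\;\;}\Big|z \Big]$. The first step is to identify the numerator of $\chi$ from Theorem \ref{TT7}: specialising the differentiation formula (\ref{!!}) to $p=1$, $q=2$, $A_1=1$, $B_2=1$ and multiplying through by $\Gamma(\beta_1)$, one checks (the same identity also follows at once from the power series after shifting the summation index) that ${}_1\tilde{\Psi}_2\Big[_{(\beta_1+B_1, B_1), (\beta_2+1, 1)}^{\;\;\;\;\;(\alpha_1+1, 1)\;\;\;\;}\Big|z \Big]=\frac{\Gamma(\beta_1+B_1)}{\Gamma(\beta_1)}\,\partial_z\tilde F(\beta_1;z)$. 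Hence, for each fixed $z>0$,
$$\chi(\beta_1)=\frac{\Gamma(\beta_1+B_1)}{\Gamma(\beta_1)}\,\partial_z\log\tilde F(\beta_1;z).$$

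Since $\alpha_1\ge\beta_2$, Theorem \ref{TT7} guarantees that $\beta_1\mapsto\chi(\beta_1)$ is increasing on $(0,\infty)$ for every value of the argument. Applying this to the pair $\beta_1<\beta_1+1$ gives, for all $t\in(0,z]$, the inequality $\frac{\Gamma(\beta_1+B_1)}{\Gamma(\beta_1)}\partial_t\log\tilde F(\beta_1;t)\le\frac{\Gamma(\beta_1+1+B_1)}{\Gamma(\beta_1+1)}\partial_t\log\tilde F(\beta_1+1;t)$. Integrating this over $t\in[0,z]$ — which is legitimate since each $\tilde F(\cdot;t)$ is positive and real-analytic in $t$ on $[0,z]$ — and using the boundary value $\tilde F(\beta_1;0)=\Gamma(\beta_1)\cdot\frac{\Gamma(\alpha_1)}{\Gamma(\beta_1)\Gamma(\beta_2)}=\frac{\Gamma(\alpha_1)}{\Gamma(\beta_2)}$, which does not depend on $\beta_1$, yields
$$\frac{\Gamma(\beta_1+B_1)}{\Gamma(\beta_1)}\Big(\log\tilde F(\beta_1;z)-\log\frac{\Gamma(\alpha_1)}{\Gamma(\beta_2)}\Big)\le\frac{\Gamma(\beta_1+1+B_1)}{\Gamma(\beta_1+1)}\Big(\log\tilde F(\beta_1+1;z)-\log\frac{\Gamma(\alpha_1)}{\Gamma(\beta_2)}\Big).$$

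To conclude I would rearrange this using the elementary Gamma-ratio identity $\frac{\Gamma(\beta_1+1+B_1)}{\Gamma(\beta_1+1)}=\frac{\beta_1+B_1}{\beta_1}\cdot\frac{\Gamma(\beta_1+B_1)}{\Gamma(\beta_1)}$, equivalently $\frac{\Gamma(\beta_1+1+B_1)}{\Gamma(\beta_1+1)}-\frac{\Gamma(\beta_1+B_1)}{\Gamma(\beta_1)}=\frac{B_1}{\beta_1}\cdot\frac{\Gamma(\beta_1+B_1)}{\Gamma(\beta_1)}$. Collecting the two $\log\frac{\Gamma(\alpha_1)}{\Gamma(\beta_2)}$ terms then turns the previous display, with no further hypothesis, into $\frac{\Gamma(\beta_1+B_1)}{\Gamma(\beta_1)}\Big(\frac{B_1}{\beta_1}\log\frac{\Gamma(\alpha_1)}{\Gamma(\beta_2)}+\log\tilde F(\beta_1;z)\Big)\le\frac{\Gamma(\beta_1+1+B_1)}{\Gamma(\beta_1+1)}\log\tilde F(\beta_1+1;z)$, and exponentiating — both exponents $\Gamma(\beta_1+B_1)/\Gamma(\beta_1)$ and $\Gamma(\beta_1+1+B_1)/\Gamma(\beta_1+1)$ being positive — gives exactly (\ref{Y}).

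The whole argument rests on Theorem \ref{TT7}, so I do not expect a genuine obstacle. The two places that need a little care are: pinning down the exponent $B_1/\beta_1$, which is forced by the Gamma-ratio identity above together with the $\beta_1$-independence of $\tilde F(\beta_1;0)$; and justifying the termwise integration of $\partial_t\log\tilde F$ over $[0,z]$, which is immediate because $\tilde F(\beta_1;\cdot)$ has nonnegative Taylor coefficients and positive constant term, hence is positive and analytic on $[0,z]$ for every $z>0$.
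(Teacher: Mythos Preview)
Your argument is correct and is essentially the paper's own proof: the paper packages the same inequality into the auxiliary function $\Xi(z)=\frac{\beta_1+B_1}{\beta_1}\log\tilde F(\beta_1+1;z)-\log\tilde F(\beta_1;z)$, shows $\Xi'(z)=\frac{\Gamma(\beta_1)}{\Gamma(\beta_1+B_1)}(\chi(\beta_1+1)-\chi(\beta_1))\ge0$ via Theorem \ref{TT7}, and then evaluates $\Xi(0)$ and applies the same Gamma-ratio identity. Your version simply unwinds $\Xi$ and integrates the pointwise inequality directly, which is the same computation.
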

\begin{proof}Suppose that $\alpha_1\geq \beta_2$ and we define the function $\Xi:(0,\infty)\longrightarrow\mathbb{R}$ with the following relation:
\begin{equation*}
\Xi(z)=\frac{\beta_1+B_1}{\beta_1}\log \Bigg[{}_1\tilde{\Psi}_2\Big[_{(\beta_1+1, B_1), (\beta_2, 1)}^{\;\;\;\;\;\;(\alpha_1, 1)\;\;\;}\Big|z \Big]\Bigg]-\log \Bigg[{}_1\tilde{\Psi}_2\Big[_{(\beta_1, B_1), (\beta_2, 1)}^{\;\;\;\;\;\;(\alpha_1, 1)\;\;\;}\Big|z \Big]\Bigg].
\end{equation*}
Make use of the following formula 
 $$\Bigg[{}_1\tilde{\Psi}_2\Big[_{(\beta_1,B_1), (\beta_2, 1)}^{\;\;\;\;\;\;(\alpha_1, 1)\;\;\;}\Big|z \Big]\Bigg]^\prime=\frac{\Gamma(\beta_1)}{\Gamma(\beta_1+B_1)}{}_1\tilde{\Psi}_2\Big[_{(\beta_1+B_1,B_1),(\beta_2+1, 1)}^{\;\;\;\;\;\;(\alpha_1+1, 1)\;\;\;}\Big|z \Big],$$
we thus get
\begin{equation}\label{N0}
\begin{split}
\Xi^\prime(z)&=\frac{\Gamma(\beta_1)}{\Gamma(\beta_1+B_1)}\left(\frac{{}_1\tilde{\Psi}_2\Big[_{(\beta_1+B_1+1,B_1), (\beta_2+1,1)}^{\;\;\;\;\;\;(\alpha_1+1,\;1)\;\;\;}\Big|z \Big]}{{}_1\tilde{\Psi}_2\Big[_{(\beta_1+1,B_1), (\beta_2, 1)}^{\;\;\;\;\;\;(\alpha_1, 1)\;\;\;}\Big|z \Big]}-\frac{{}_1\tilde{\Psi}_2\Big[_{(\beta_1+B_1,B_1), (\beta_2+1, 1)}^{\;\;\;\;\;\;(\alpha_1+1, 1)\;\;\;}\Big|z \Big]}{{}_1\tilde{\Psi}_2\Big[_{(\beta_1, B_1), (\beta_2, 1)}^{\;\;\;\;\;\;(\alpha_1, 1)\;\;\;}\Big|z \Big]}\right)\\
&=\frac{\Gamma(\beta_1)}{\Gamma(\beta_1+B_1)}\Big(\chi(\beta_1+1)-\chi(\beta_1)\Big).
\end{split}
\end{equation}
 By  by taking into account the Theorem \ref{TT7} we deduce that $\Xi^\prime(z)\geq0$, and consequently the function $\Xi(z)$ is increasing on $(0,\infty).$ Hence 
\begin{equation}\label{tyur}
\Xi(z)\geq\Xi(0)=\frac{B_1}{\beta_2}\log\Big(\frac{\Gamma(\alpha_1)}{\Gamma(\beta_2)}\Big).
\end{equation}
 By these observation and using the relationship :
$$\frac{\beta_1+B_1}{\beta_1}=\left(\frac{\Gamma(\beta_1+B_1+1)}{\Gamma(\beta_1+1)}\right).\left(\frac{\Gamma(\beta_1)}{\Gamma(\beta_1+B_1)}\right),$$
we can complete the proof of the above-asserted results immediately.
\end{proof}

\begin{coro}Let $\alpha, \beta>0,$ such that $\alpha_1\geq\beta_2.$ Then the following inequality 
\begin{equation}\label{ààà}
\frac{{}_1\tilde{\Psi}_2\Big[_{(\beta_1+1, B_1), (\beta_2, 1)}^{\;\;\;\;(\alpha_1, 1)\;\;\;}\Big|z \Big]}{{}_1\tilde{\Psi}_2\Big[_{(\beta_1, B_1), (\beta_2, 1)}^{\;\;\;\;(\alpha_1, 1)\;\;\;}\Big|z \Big]}+\Bigg[\frac{\Gamma(\beta_2)}{\Gamma(\alpha_1)}\times{}_1\tilde{\Psi}_2\Big[_{(\beta_1+1, B_1),\;\;\;\; (\beta_2, 1)}^{\;\;\;\;(\alpha_1, 1)\;\;\;}\Big|z \Big]\Bigg]^{\frac{B_1}{\beta_1}}\geq 2,
\end{equation}
is valid for all $z\in(0,\infty).$
\end{coro}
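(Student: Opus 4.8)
The plan is to obtain this Wilker--type inequality as an immediate corollary of the Lazarevi\'c--type inequality (\ref{Y}) of Theorem \ref{T7} combined with the elementary bound $t+t^{-1}\geq 2$, valid for every $t>0$. To lighten the notation I would write
$$f={}_1\tilde{\Psi}_2\Big[_{(\beta_1, B_1), (\beta_2, 1)}^{\;\;\;\;\;(\alpha_1, 1)\;\;\;\;\;}\Big|z \Big],\qquad g={}_1\tilde{\Psi}_2\Big[_{(\beta_1+1, B_1), (\beta_2, 1)}^{\;\;\;\;\;\;(\alpha_1, 1)\;\;\;\;\;}\Big|z \Big],$$
and record that $f>0$ and $g>0$ on $(0,\infty)$, since every Gamma value occurring in the associated power series is positive. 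In these terms the inequality to be established reads $g/f+\big[(\Gamma(\beta_2)/\Gamma(\alpha_1))\,g\big]^{B_1/\beta_1}\geq 2$.

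Next I would strip the Gamma--quotient exponents out of (\ref{Y}). Because $\Gamma(\beta_1+B_1+1)=(\beta_1+B_1)\Gamma(\beta_1+B_1)$ and $\Gamma(\beta_1+1)=\beta_1\Gamma(\beta_1)$, the exponent on the right of (\ref{Y}) is $\frac{\beta_1+B_1}{\beta_1}\cdot\frac{\Gamma(\beta_1+B_1)}{\Gamma(\beta_1)}$, so both sides of (\ref{Y}) are the $\big(\Gamma(\beta_1+B_1)/\Gamma(\beta_1)\big)$--th powers of positive quantities, with this common exponent strictly positive. Raising (\ref{Y}) to its reciprocal gives
$$\Big(\frac{\Gamma(\alpha_1)}{\Gamma(\beta_2)}\Big)^{B_1/\beta_1}f\;\leq\;g^{\,1+B_1/\beta_1},$$
and dividing through by $f\,g^{B_1/\beta_1}>0$ yields the Lazarevi\'c--type estimate
$$\frac{g}{f}\;\geq\;\Big(\frac{\Gamma(\alpha_1)}{\Gamma(\beta_2)}\Big)^{B_1/\beta_1}g^{-B_1/\beta_1}\;=\;\Big[\frac{\Gamma(\beta_2)}{\Gamma(\alpha_1)}\,g\Big]^{-B_1/\beta_1}.$$

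Finally, setting $t=\big[(\Gamma(\beta_2)/\Gamma(\alpha_1))\,g\big]^{B_1/\beta_1}$, which is a well--defined positive real number because $B_1\geq 0$, $\beta_1>0$ and $g>0$, the previous display asserts $g/f\geq t^{-1}$, whence
$$\frac{g}{f}+\Big[\frac{\Gamma(\beta_2)}{\Gamma(\alpha_1)}\,g\Big]^{B_1/\beta_1}\;=\;\frac{g}{f}+t\;\geq\;t^{-1}+t\;\geq\;2,$$
which is precisely the claimed inequality.

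I do not anticipate any genuine obstacle here; the result is a routine repackaging of Theorem \ref{T7}. The only points that deserve a moment's care are the bookkeeping of the Gamma--quotient exponents when extracting the root of (\ref{Y}), the verification that $f$ and $g$ are strictly positive (so that all the divisions and the concluding AM--GM step are legitimate), and the degenerate case $B_1=0$, where the second summand collapses to $1$, (\ref{Y}) already reduces to $f\leq g$, and the assertion $g/f+1\geq 2$ is immediate.
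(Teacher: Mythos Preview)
Your proof is correct and follows essentially the same route as the paper: both arguments start from the Lazarevi\'c--type inequality (equivalently (\ref{tyur}) or (\ref{Y})) to obtain that the product $\dfrac{g}{f}\cdot\Big[\dfrac{\Gamma(\beta_2)}{\Gamma(\alpha_1)}\,g\Big]^{B_1/\beta_1}\geq 1$, and then apply the AM--GM inequality. The only cosmetic difference is that the paper writes AM--GM as $A+B\geq 2\sqrt{AB}\geq 2$, whereas you first rearrange $AB\geq 1$ into $A\geq B^{-1}$ and invoke $t+t^{-1}\geq 2$; these are the same step.
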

\begin{proof}From the inequality (\ref{tyur}), we have
\begin{equation*}\label{KKK}\frac{\Bigg[{}_1\tilde{\Psi}_2\Big[_{(\beta_1+1,B_1), (\beta_2, 1)}^{\;\;\;\;(\alpha_1, 1)\;\;\;\;}\Big|z \Big]\Bigg]^{\frac{\beta_1+B_1}{\beta_1}}}{\left[\frac{\Gamma(\alpha_1)}{\Gamma(\beta_2)}\right]^{\frac{B_1}{\beta_1}} {}_1\tilde{\Psi}_2\Big[_{(\beta_1, B_1), (\beta_2, 1 )}^{\;\;\;\;(\alpha_1, 1)\;\;\;\;}\Big|z \Big]}=\frac{{}_1\tilde{\Psi}_2\Big[_{(\beta_1+1, B_1), (\beta_2, 1)}^{\;\;\;\;(\alpha_1, 1)\;\;\;\;}\Big|z \Big]}{{}_1\tilde{\Psi}_2\Big[_{(\beta_1, B_1), (\beta_2, 1)}^{\;\;\;\;(\alpha_1, 1)\;\;\;\;}\Big|z \Big]}.\Bigg[\frac{\Gamma(\beta_2)}{\Gamma(\alpha_1)}\times{}_1\tilde{\Psi}_2\Big[_{(\beta_1+1, B_1),(\beta_2, 1)}^{\;\;\;\;(\alpha_1, 1)\;\;\;\;}\Big|z \Big]\Bigg]^{\frac{B_1}{\beta_1}}\geq1.
\end{equation*}
 If we use the above inequality  and the Arithmetic-Geometric Mean Inequality, we find that
$$\frac{1}{2}\left[\frac{{}_1\tilde{\Psi}_2\Big[_{(\beta_1+1, B_1), (\beta_2, 1)}^{\;\;\;\;(\alpha_1, 1)\;\;\;\;}\Big|z \Big]}{{}_1\tilde{\Psi}_2\Big[_{(\beta_1, B_1), (\beta_2, 1)}^{\;\;\;\;(\alpha_1, 1)\;\;\;\;}\Big|z \Big]}+\Bigg[\frac{\Gamma(\beta_2)}{\Gamma(\alpha_1)}{}_1\tilde{\Psi}_2\Big[_{(\beta_1+1, B_1),(\beta_2, 1)}^{\;\;\;\;(\alpha_1, 1)\;\;\;\;}\Big|z \Big]\Bigg]^{\frac{B_1}{\beta_1}}\right]$$
\begin{equation}
\begin{split}
\;\;\;\;\;\;\;\;\;\;\;\;\;\;\;\;\;\;\;\;\;\;\;\;\;\;\;\;\;\;\;\;\;\;\;\;\;\;\;\;\;\;\;\;\;\;\;\;\;\;\;\;&\geq \sqrt{\frac{\Bigg[{}_1\tilde{\Psi}_2\Big[_{(\beta_1+1,B_1), (\beta_2, 1)}^{\;\;\;\;(\alpha_1, 1)\;\;\;\;}\Big|z \Big]\Bigg]^{\frac{\beta_1+B_1}{\beta_1}}}{\left[\frac{\Gamma(\alpha_1)}{\Gamma(\beta_2)}\right]^{\frac{B_1}{\beta_1}}\times{}_1\tilde{\Psi}_2\Big[_{(\beta_1, B_1), (\beta_2, 1 )}^{\;\;\;\;(\alpha_1, 1)\;\;\;\;}\Big|z \Big]}}\\
&\geq 1.
\end{split}
\end{equation}
This completes the proof.
\end{proof}

Letting in the inequalities (\ref{Y}) and (\ref{ààà}) the value $B_1=1,$ we obtain the Lazarevi\'c and Wilker type inequalities for the hypergeometric function ${}_1F_2.$

\begin{coro} Let $\alpha_1,\beta>0.$ Then the following inequalities 
\begin{equation}
\Big[{}_1F_2\Big(^{\;\;\;\alpha_1}_{\beta_1,\;\beta_2}\Big| z\Big)\Big]^{\beta_1}\leq \Big[{}_1F_2\Big(^{\;\;\;\;\alpha_1}_{\beta_1+1,\;\beta_2}\Big| z\Big)\Big]^{\beta_1+1},
\end{equation}
and
\begin{equation}
\frac{{}_1F_2\Big(^{\;\;\;\;\alpha_1}_{\beta_1+1,\;\beta_2}\Big| z\Big)}{{}_1F_2\Big(^{\;\;\;\alpha_1}_{\beta_1,\;\beta_2}\Big| z\Big)}+\Big[{}_1F_2\Big(^{\;\;\;\;\alpha_1}_{\beta_1+1,\;\beta_2}\Big| z\Big)\Big]^{\frac{1}{\beta_1}}\geq2,
\end{equation}
holds true for all $z\in(0,\infty).$
\end{coro}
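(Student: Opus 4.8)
The plan is to derive both inequalities by specializing Theorem~\ref{T7}, i.e. inequality (\ref{Y}), and its corollary, inequality (\ref{ààà}), to the case $B_1=1$, and then rewriting the normalized Fox--Wright functions as ${}_1F_2$ by means of the reduction formula (\ref{13}).

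For brevity write \begin{equation*}
F=\,{}_1F_2\Big(^{\;\;\;\alpha_1}_{\beta_1,\;\beta_2}\Big| z\Big),\qquad G=\,{}_1F_2\Big(^{\;\;\;\;\alpha_1}_{\beta_1+1,\;\beta_2}\Big| z\Big),\qquad c=\frac{\Gamma(\alpha_1)}{\Gamma(\beta_2)}.
\end{equation*} The only identity needed is the one obtained by putting $A_1=B_1=B_2=1$ in (\ref{13}) and multiplying by $\Gamma(\beta_1)$, namely \begin{equation*}
{}_1\tilde{\Psi}_2\Big[_{(\beta_1,1), (\beta_2,1)}^{\;\;\;\;(\alpha_1,1)\;\;}\Big|z \Big]=c\,F,\qquad {}_1\tilde{\Psi}_2\Big[_{(\beta_1+1,1), (\beta_2,1)}^{\;\;\;\;\;(\alpha_1,1)\;\;}\Big|z \Big]=c\,G,
\end{equation*} together with the elementary gamma quotients $\Gamma(\beta_1+1)/\Gamma(\beta_1)=\beta_1$ and $\Gamma(\beta_1+2)/\Gamma(\beta_1+1)=\beta_1+1$, which are precisely the exponents that appear in (\ref{Y}) once $B_1=1$.

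For the Lazarevi\'c-type inequality I substitute these relations into (\ref{Y}) with $B_1=1$: the left-hand side becomes $\big(c^{1/\beta_1}\,cF\big)^{\beta_1}=c^{1+\beta_1}F^{\beta_1}$ and the right-hand side becomes $\big(cG\big)^{\beta_1+1}=c^{1+\beta_1}G^{\beta_1+1}$, so after cancelling the common positive factor $c^{1+\beta_1}$ one is left with $F^{\beta_1}\le G^{\beta_1+1}$, the first asserted inequality. For the Wilker-type inequality I substitute into (\ref{ààà}) with $B_1=1$: the first summand equals $cG/(cF)=G/F$, while in the bracketed second summand the prefactor is $\Gamma(\beta_2)/\Gamma(\alpha_1)=c^{-1}$ and the exponent is $B_1/\beta_1=1/\beta_1$, so that summand equals $(c^{-1}\,cG)^{1/\beta_1}=G^{1/\beta_1}$; hence (\ref{ààà}) reads $G/F+G^{1/\beta_1}\ge2$, which is the second asserted inequality.

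I expect no real obstacle here, since (\ref{Y}) and (\ref{ààà}) already carry all the analytic content and what remains is purely the bookkeeping above. The one point worth flagging is that Theorem~\ref{T7} and its corollary were proved under the additional hypothesis $\alpha_1\ge\beta_2$, so strictly this corollary should be read with that restriction in force; to cover the full range $\alpha_1,\beta_2>0$ one would have to rerun the monotonicity arguments behind Theorems~\ref{TT7} and~\ref{T7} (Cauchy product together with Lemma~\ref{l2}) directly in the case $B_1=1$ and check that the relevant ratios of power-series coefficients remain monotone without invoking $\alpha_1\ge\beta_2$.
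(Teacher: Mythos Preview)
Your approach is exactly the paper's: the corollary is stated immediately after the sentence ``Letting in the inequalities (\ref{Y}) and (\ref{ààà}) the value $B_1=1$, we obtain the Lazarevi\'c and Wilker type inequalities for the hypergeometric function ${}_1F_2$,'' and no further proof is given. Your bookkeeping with $c=\Gamma(\alpha_1)/\Gamma(\beta_2)$ and the gamma-quotient exponents is correct, and your observation that the hypothesis $\alpha_1\ge\beta_2$ from Theorem~\ref{T7} has been silently dropped in the corollary's statement is well taken---the paper does not address this, so the corollary as printed should indeed be read under that restriction.
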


Letting $\alpha_1=1$ in the inequalities (\ref{Y}) and (\ref{ààà}), we get the  Lazarevi\'c and Wilker type inequalities  for the four-parametric Mittag--Leffler function $E_{B_1,\beta_1;1,\beta_2}(z).$

\begin{coro} Let $\beta_1>0$ and $B_1\geq0.$ If $0<\beta_2\geq1,$ then the following inequalities
\begin{equation}
\left[\left(\frac{1}{\Gamma(\beta_2)}\right)^{\frac{B_1}{\beta_1}}E_{B_1,\beta_1;1,\beta_2}(z)\right]^{\frac{\Gamma(\beta_1+B_1)}{\Gamma(\beta_1)}}\leq \Big[E_{B_1,\beta_1+1;1,\beta_2}(z)\Big]^{\frac{\Gamma(\beta_1+B_1+1)}{\Gamma(\beta_1+1)}},
\end{equation}
and
\begin{equation}
\frac{E_{B_1,\beta_1+1;1,\beta_2}(z)}{E_{B_1, \beta_1;1,\beta_2}(z)}+\Big[\Gamma(\beta_2)E_{B_1, \beta_1+1;1, \beta_2}(z)\Big]^{\frac{B_1}{\beta_1}}\geq2,
\end{equation}
holds true for all $z>0.$
\end{coro}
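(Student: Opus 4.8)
The plan is to obtain this corollary as the special case $\alpha_1=1$ of Theorem~\ref{T7} and of the Corollary stated immediately before it. The one preliminary observation needed is that a normalized Fox--Wright ${}_1\tilde{\Psi}_2$ whose top parameter is $(\alpha_1,A_1)=(1,1)$ reduces to a four--parametric Mittag--Leffler function: since $\Gamma(1+k)=k!$,
$${}_1\tilde{\Psi}_2\Big[_{(\beta_1, B_1), (\beta_2, 1)}^{\;\;\;\;\;(1, 1)\;\;\;\;\;}\Big|z \Big]=\Gamma(\beta_1)\sum_{k=0}^\infty\frac{\Gamma(1+k)}{k!\,\Gamma(\beta_1+kB_1)\Gamma(\beta_2+k)}\,z^k=\Gamma(\beta_1)\,E_{B_1,\beta_1;1,\beta_2}(z),$$
which is precisely the representation (\ref{199}) taken with $n=2$ and $B_2=1$; the same identity holds with $\beta_1$ replaced by $\beta_1+1$. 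I would record this identity first, since every step afterwards is just substitution.

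Next I would check that the standing hypothesis $\alpha_1\ge\beta_2$ appearing in Theorem~\ref{T7} and in its Corollary collapses, at $\alpha_1=1$, to exactly the condition $0<\beta_2\le 1$ imposed here. With that in place I substitute $\alpha_1=1$ into the Lazarevi\'c--type inequality (\ref{Y}) and into the Wilker--type inequality (\ref{ààà}), using $\Gamma(\alpha_1)=\Gamma(1)=1$, so that $\Gamma(\alpha_1)/\Gamma(\beta_2)=1/\Gamma(\beta_2)$ and $\Gamma(\beta_2)/\Gamma(\alpha_1)=\Gamma(\beta_2)$. Replacing each occurrence of ${}_1\tilde{\Psi}_2$ by the corresponding multiple of $E_{B_1,\beta_1;1,\beta_2}$ turns (\ref{Y}) and (\ref{ààà}) into the two inequalities asserted in the statement; in the Wilker inequality the common factor $\Gamma(\beta_1)$ in the numerator and the denominator of the ratio cancels at once.

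The only non-routine point, and the step I would write out explicitly, is the bookkeeping of the normalizing constants $\Gamma(\beta_1)$ and $\Gamma(\beta_1+1)$ that the passage ${}_1\tilde{\Psi}_2=\Gamma(\cdot)\,E_{B_1,\cdot;1,\beta_2}$ introduces into the Lazarevi\'c inequality; one must be attentive to whether these constants are to be left explicit or absorbed into the normalization convention for $E_{B_1,\beta_1;1,\beta_2}$. Using $\Gamma(\beta_1+1)=\beta_1\Gamma(\beta_1)$ and $\Gamma(\beta_1+B_1+1)=(\beta_1+B_1)\Gamma(\beta_1+B_1)$ one rewrites the two exponents $\Gamma(\beta_1+B_1)/\Gamma(\beta_1)$ and $\Gamma(\beta_1+B_1+1)/\Gamma(\beta_1+1)$ and reconciles the powers of these constants with the displayed form. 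No analytic ingredient beyond Theorem~\ref{T7} and the Arithmetic--Geometric Mean step already used to derive (\ref{ààà}) is involved, so there is no essential obstacle: the result is a literal corollary, obtained by specialization together with a careful change of normalization.
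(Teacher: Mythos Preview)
Your approach is exactly the paper's: the corollary is obtained simply by setting $\alpha_1=1$ in (\ref{Y}) and (\ref{ààà}) and invoking the identification ${}_1\tilde\Psi_2\big[{}^{(1,1)}_{(\beta_1,B_1),(\beta_2,1)}\,\big|\,z\big]=\Gamma(\beta_1)\,E_{B_1,\beta_1;1,\beta_2}(z)$ from (\ref{199}). You are in fact more careful than the paper in flagging the normalizing factors $\Gamma(\beta_1)$ and $\Gamma(\beta_1+1)$ that this identification introduces; the paper does not comment on them at all.
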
 
\vspace{0,3cm}
\begin{remark}
\noindent 1. Letting $B_1=1$ in Theorem \ref{TT7}, we conclude that  the function 
$$\beta_1\mapsto {}_1F_2\left(^{\;\;\;\alpha_1+1}_{\beta_1+1,\;\beta_2+1}\Big|z \right)\Big/{}_1F_2\left(^{\;\;\;\alpha_1}_{\beta_1,\;\beta_2}\Big|z \right),$$
is increasing on $(0,\infty).$\\
\vspace{0,3cm}
\noindent 2. If we choose $\alpha_1=\beta_2,$  in (\ref{Y}) (resp. in (\ref{ààà})), we conclude that the inequality (\ref{Y}) (resp.  (\ref{ààà}) is a natural  generalization of the Lazarevi\'c type inequality for the Wright function \cite[Theorem 4.1, p. 138]{Khaled1}
\begin{equation}\label{211}
\left[\mathcal{W}_{B_1,\beta_1}(z)\right]^{\frac{\Gamma(\beta_1+B_1)}{\Gamma(\beta_1)}}\leq\left[\mathcal{W}_{B_1,\beta_1+1}(z)\right]^{\frac{\Gamma(\beta_1+B_1+1)}{\Gamma(\beta_1+1)}}.
\end{equation} 
\noindent 3. Choosing $B_1=1$ and $\beta_1=\nu+1$ where $\nu>-1$ in (\ref{211}), we obtain  \cite[Theorem 1]{A2}:
\begin{equation}
[\mathcal{I}_\nu(z)]^{(\nu+1)/(\nu+2)}\leq\mathcal{I}_{\nu+1}(z),
\end{equation}  
where $z\in\mathbb{R}.$ It is worth mentioning that in particular we have $\mathcal{I}_{-1/2}(z)=\cosh z$ and $\mathcal{I}_{1/2}(z)=\sinh z/z$, thus if $\nu=-1/2,$ we  derive the Lazarevi\'c--type inequality [\cite[p.270]{MI}: 
$$\cosh z\leq \left(\frac{\sinh z}{z}\right)^3.$$ 
\noindent 4. If we choose $\alpha_1=\beta_2,$  in (\ref{ààà}), we deduce that the inequality (\ref{ààà}) is a natural  generalization of the Wilker type inequality for the Wright function 
\begin{equation}\label{23}
\end{equation}
\noindent 5. Taking in (\ref{23}) the values $\alpha=1$ and $\beta=\nu+1$ where $\nu>-1$, we obtain the following inequality [\cite{A2}, Theorem 1]:
\begin{equation}
\frac{\mathcal{I}_{\nu+1}(z)}{\mathcal{I}_{\nu}(z)}+[\mathcal{I}_{\nu+1}(z)]^{1/(\nu+1)}\geq2,
\end{equation}
\vspace{0.3cm}
where $z\in\mathbb{R}$. If $\nu=-1/2,$ we  derive the Wilker--type inequality \cite{WI, ZH}:
\begin{equation}
\left(\frac{\sinh z}{z}\right)^2+\frac{\tanh z}{z}\geq2,
\end{equation}
where $z\in\mathbb{R}$.

\end{remark}
\section{Further results}

In this section we show other inequalities for the Fox-Wright functions.

\begin{theorem}\label{ttt34}Let $\alpha, \beta>0,$ such that $\alpha_i\geq\beta_{i+1},\;i=1,...,p.$ Then, the function $z\mapsto {}_p\Psi_{p+1}^*\Big[_{(\beta_1, B_1),(\beta_p, 1)}^{\;\;\;\;(\alpha_p, 1)\;\;}\Big|z\Big]$ is log-concave on $(0,\infty).$ Furthermore, the following inequalities
\begin{equation}\label{876}
{}_p\Psi_{p+1}^*\Big[_{(\beta_1, B_1),(\beta_p, 1)}^{\;\;\;\;(\alpha_p, 1)\;\;}\Big|z_1\Big]{}_p\Psi_{p+1}^*\Big[_{(\beta_1, B_1),(\beta_p, 1)}^{\;\;\;\;(\alpha_p, 1)\;\;}\Big|z_2\Big]\leq{}_p\Psi_{p+1}^*\Big[_{(\beta_1, B_1),(\beta_p, 1)}^{\;\;\;(\alpha_p, 1)\;\;}\Big|\frac{z_1+z_2}{2}\Big],\;z_1, z_2>0,
\end{equation} 
\begin{equation}\label{877}
{}_p\Psi_{p+1}^*\Big[_{(\beta_1, B_1),(\beta_p, 1)}^{\;\;\;\;(\alpha_p, 1)\;\;}\Big|z\Big]\leq e^{\left(\prod_{i=1}^p\frac{\alpha_i}{\beta_{i+1}}\right)\left(\frac{\Gamma(\beta_1)}{\Gamma(\beta_1+B_1)}\right)z}, z>0.
\end{equation}
\begin{equation}\label{!!!}
{}_p\Psi_{p+1}\Big[_{(\beta_1+B_1, B_1),(\beta_p+1, 1)}^{\;\;\;\;(\alpha_p+1, 1)\;\;}\Big|z\Big]\leq\left(\prod_{i=1}^p\frac{\alpha_i}{\beta_{i+1}}\right)\left(\frac{\Gamma(\beta_1)}{\Gamma(\beta_1+B_1)}\right){}_p\Psi_{p+1}\Big[_{(\beta_1, B_1),(\beta_p, 1)}^{\;\;\;\;(\alpha_p, 1)\;\;}\Big|z\Big] , z>0.
\end{equation}
holds true.  
\end{theorem}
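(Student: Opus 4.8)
The plan is to reduce all three displays to a single monotonicity statement, in the spirit of the proof of Theorem~\ref{T2}. Put $f(z)={}_p\Psi_{p+1}^{*}\Big[_{(\beta_1,B_1),(\beta_p,1)}^{\;\;(\alpha_p,1)}\Big|z\Big]=\sum_{k\ge0}c_kz^k$; the normalization makes $c_0=1$ and $c_1=\big(\prod_{i=1}^{p}\tfrac{\alpha_i}{\beta_{i+1}}\big)\tfrac{\Gamma(\beta_1)}{\Gamma(\beta_1+B_1)}$. For this choice of steps $\epsilon=1+B_1>0$, so $f$ is entire with $c_k>0$ for all $k$. Since $f(0)=1$, it suffices to prove that $f$ is log-concave on $(0,\infty)$: then inequality~(\ref{876}) is the two-point log-concavity inequality with equal weights at $z_1,z_2$, and inequality~(\ref{877}) is the tangent-line bound $\log f(z)\le\log f(0)+z\,(\log f)'(0)=c_1z$ for the concave function $\log f$.

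First I would invoke the criterion stated in Definition 1: $f$ is log-concave on $(0,\infty)$ as soon as $f'/f$ is decreasing there. Differentiating the series term by term (the analogue of~(\ref{!!}) for the present steps, namely $\alpha_i\mapsto\alpha_i+1$, $\beta_1\mapsto\beta_1+B_1$, $\beta_j\mapsto\beta_j+1$ for $j\ge2$) shows that, up to the same normalizing constant, $f'(z)$ is the Fox--Wright function with the shifted parameters occurring on the left of~(\ref{!!!}); hence $f'(z)/f(z)$ equals the ratio ${}_p\Psi_{p+1}\big[_{(\beta_1+B_1,B_1),(\beta_p+1,1)}^{\;(\alpha_p+1,1)}\big|z\big]\big/{}_p\Psi_{p+1}\big[_{(\beta_1,B_1),(\beta_p,1)}^{\;(\alpha_p,1)}\big|z\big]$. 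By Lemma~\ref{l2} it is enough to show that the quotient $R_k$ of the $k$-th Taylor coefficients is nonincreasing. A direct computation gives $R_k=\big(\prod_{i=1}^{p}\tfrac{\alpha_i+k}{\beta_{i+1}+k}\big)\tfrac{\Gamma(\beta_1+kB_1)}{\Gamma(\beta_1+kB_1+B_1)}$, with $R_0=c_1$.

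Then I would bound $R_{k+1}/R_k$, which factors as $\prod_{i=1}^{p}\tfrac{(\alpha_i+k+1)(\beta_{i+1}+k)}{(\alpha_i+k)(\beta_{i+1}+k+1)}$ times $\tfrac{\Gamma(\beta_1+kB_1+B_1)^2}{\Gamma(\beta_1+kB_1)\Gamma(\beta_1+kB_1+2B_1)}$. Each factor of the product is $\le1$ because $k\mapsto(\alpha_i+k)/(\beta_{i+1}+k)$ is nonincreasing under the hypothesis $\alpha_i\ge\beta_{i+1}$, and the remaining factor is $\le1$ by the log-convexity of $\Gamma$ on $(0,\infty)$ --- this is inequality~(\ref{32}) with $z=\beta_1+kB_1$ and $a=b=B_1$, exactly as used in the proof of Theorem~\ref{T6}. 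Therefore $R_{k+1}\le R_k$, so $f'/f$ is decreasing by Lemma~\ref{l2}, and $f$ is log-concave. Monotonicity also gives $f'(z)/f(z)\le f'(0)/f(0)=c_1$ for $z>0$; multiplying through by the positive denominator and the normalizing constant yields~(\ref{!!!}). I expect the only delicate points to be the exact bookkeeping of the shifted parameters in the differentiation step and making sure the $\Gamma$-quotient factor is controlled by log-convexity rather than by a cruder estimate; the rest is routine.
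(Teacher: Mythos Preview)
Your proposal is correct and follows essentially the same route as the paper: both compute the coefficient ratio $R_k=u_k$ of $f'/f$, bound $R_{k+1}/R_k$ by splitting it into the rational factor (handled via $\alpha_i\ge\beta_{i+1}$) and the $\Gamma$-factor (handled via log-convexity of $\Gamma$, i.e.\ inequality~(\ref{32}) with $a=b=B_1$), and then invoke Lemma~\ref{l2}. Your derivation of~(\ref{877}) via the tangent-line bound for the concave function $\log f$ is marginally more direct than the paper's argument, which instead shows that $z\mapsto(\log f(z))/z$ is decreasing and computes the limit at $0$ by l'Hospital, but the two are equivalent here.
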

\begin{proof} To proved that the function he function $z\mapsto {}_p\Psi_{p+1}^*\Big[_{(\beta_1, B_1),(\beta_p, 1)}^{\;\;\;\;(\alpha_p, 1)\;\;}\Big|z\Big]$ is log-concave on $(0,\infty),$ it suffices to prove that the logarithmic derivative of ${}_p\Psi_{p+1}^*\Big[_{(\beta_1, B_1),(\beta_p, 1)}^{\;\;\;\;(\alpha_p, 1)\;\;}\Big|z\Big]$ is decreasing on $(0,\infty)$. Making use the power-series of the normalized Fox-Wright function, we get 
$$\frac{\left({}_p\Psi_{p+1}^*\Big[_{(\beta_1, B_1),(\beta_p, 1)}^{\;\;\;\;(\alpha_p, 1)\;\;}\Big|z_1\Big]\right)^\prime}{{}_p\Psi_{p+1}^*\Big[_{(\beta_1, B_1),(\beta_p, 1)}^{\;\;\;\;(\alpha_p, 1)\;\;}\Big|z\Big]}=\sum_{k=0}^\infty\frac{\prod_{i=1}^p\Gamma(\alpha_i+k+1)z^k}{k!\Gamma(\beta_1+(k+1)B_1)\prod_{i=2}^{p+1}\Gamma(\beta_i+k+1)}\Big/\sum_{k=0}^\infty\frac{\prod_{i=1}^p\Gamma(\alpha_i+k)z^k}{k!\Gamma(\beta_1+kB_1)\prod_{i=2}^{p+1}\Gamma(\beta_i+k)}.$$
Now, we define the sequence $(u_k)_{k\geq0}$ by $u_k=\left(\prod_{i=1}^p\frac{(\alpha_i+k)}{(\beta_{i+1}+k)}\right).\left(\frac{\Gamma(\beta_1+kB_1)}{\Gamma(\beta_1+(k+1)B_1)}\right).$ Thus,
\begin{equation}\label{444}
\begin{split}
\frac{u_{k+1}}{u_k}&=\left(\prod_{i=1}^p\frac{(\alpha_i+k+1)(\beta_{i+1}+k)}{(\alpha_i+k)(\beta_i+k+1)}\right).\left(\frac{\Gamma^2(\beta_1+(k+1)B_1)}{\Gamma(\beta_1+kB_1)\Gamma(\beta_1+(k+2)B_1}\right)\\
&\leq\frac{\Gamma^2(\beta_1+(k+1)B_1)}{\Gamma(\beta_1+kB_1)\Gamma(\beta_1+(k+2)B_1)},
\end{split}
\end{equation}
for $\alpha_i\geq\beta_{i+1},\;i=1,...,p.$ On the other hand, taking in (\ref{32}) the values $z=\beta_1+kB_1$ and $a=b=B_1$, we deduce the following Tur\'an type inequalities
\begin{equation}\label{4444}
\Gamma(\beta_1+kB_1)\Gamma(\beta_1+(k+2)B_1)-\Gamma^2(\beta_1+(k+1)B_1)\geq0.
\end{equation}
In view of (\ref{444}) and (\ref{4444}), we deduce that the sequence $(u_k)_{k\geq0}$ is decreasing. Thus,  the function 
$$z\mapsto \left({}_p\Psi_{p+1}^*\Big[_{(\beta_1, B_1),(\beta_p, 1)}^{\;\;\;\;(\alpha_p, 1)\;\;}\Big|z\Big]\right)^\prime\Big/{}_P\Psi_{p+1}^*\Big[_{(\beta_1, B_1),(\beta_p, 1)}^{\;\;\;\;(\alpha_p, 1)\;\;}\Big|z\Big]$$ is decreasing on $(0,\infty)$, and consequently the function $z\mapsto{}_p\Psi_{p+1}^*\Big[_{(\beta_1, B_1),(\beta_p, 1)}^{\;\;\;\;(\alpha_p, 1)\;\;}\Big|z\Big]$ is log-concave $(0,\infty).$ Thus implies that for all $t\in[0,1]$ and $z_1,z_2>0,$ we have $$
\left[{}_p\Psi_{p+1}^*\Big[_{(\beta_1, B_1),(\beta_p, 1)}^{\;\;\;\;(\alpha_p, 1)\;\;}\Big|z_1\Big]\right]^t\left[{}_p\Psi_{p+1}^*\Big[_{(\beta_1, B_1),(\beta_p, 1)}^{\;\;\;\;(\alpha_p, 1)\;\;}\Big|z_2\Big]\right]^{1-t}\leq {}_p\Psi_{p+1}^*\Big[_{(\beta_1, B_1),(\beta_p, 1)}^{\;\;\;\;(\alpha_p, 1)\;\;}\Big|tz_1+(1-t)z_2\Big],$$
setting $t=1/2$ we get the inequality (\ref{876}). Now let us focus on the inequality (\ref{877}), to prove this, let
$$f(z)=\log{}_p\Psi_{p+1}^*\Big[_{(\beta_1, B_1),(\beta_p, 1)}^{\;\;\;\;(\alpha_p, 1)\;\;}\Big|z\Big] \textrm{and}\;\; g(z)=z.$$ 
By using the fact that the function $f^\prime(z)$ is decreasing on  $(0,\infty),$ we deduce that the function $x\mapsto f(z)/g(z)=(f(z)-f(0))/(g(z)-g(0))$ is also decreasing on $(0,\infty).$ On the other hand, from the Bernouilli-l'Hospital's rule and the differentiation formula  (\ref{!!}), it is easy to deduce that
$$\lim_{x\rightarrow0}\frac{f(x)}{g(x)}=\left(\prod_{i=1}^p\frac{\alpha_i}{\beta_{i+1}}\right)\left(\frac{\Gamma(\beta_1)}{\Gamma(\beta_1+B_1)}\right).$$
Finally, for the proof of inequality (\ref{!!!}), we appeal again the monotonicity for the ratios $f^\prime(x)/g\prime(x)$, we get
$$f^\prime(x)\leq \left(\prod_{i=1}^p\frac{\alpha_i}{\beta_{i+1}}\right)\left(\frac{\Gamma(\beta_1)}{\Gamma(\beta_1+B_1)}\right).$$
By again the differentiation formula (\ref{!!}) the proof of inequality (\ref{!!!}) is done, which evidently completes the proof of Theorem  \ref{ttt34}.
\end{proof}

Taking in Theorem  \ref{ttt34} the value $B_1=1$, we obtain the following inequalities for the hypergeometric function ${}_pF_{p+1}.$

\begin{coro}Let $\alpha_1, \beta_1,\beta_2>0.$ If $\alpha_i\geq\beta_{i+1},\;i=1,...,p,$ then the function $z\mapsto{}_pF_{p+1}(z)$ is log-concave on $(0,\infty)$, and satisfies the following inequalities:
\begin{equation*}
\begin{split}
{}_pF_{p+1}\Big(^{\;\;\alpha_1,...,\alpha_p}_{\beta_1,...,\beta_{p+1}}\Big|z_1\Big){}_pF_{p+1}\Big(^{\;\;\alpha_1,...,\alpha_p}_{\beta_1,...,\beta_{p+1}}\Big|z_2\Big)&\leq {}_pF_{p+1}\Big(^{\;\;\alpha_1,...,\alpha_p}_{\beta_1,...,\beta_{p+1}}\Big|\frac{z_1+z_2}{2}\Big),\;z_1,z_2>0.\\
{}_pF_{p+1}\Big(^{\;\;\alpha_1,...,\alpha_p}_{\beta_1,...,\beta_{p+1}}\Big|z\Big)&\leq e^{\frac{\alpha_1...\alpha_p}{\beta_1...\beta_{p+1}}z},\;z>0.\\
{}_pF_{p+1}\Big(^{\;\;\alpha_1+1,...,\alpha_p+1}_{\beta_1+1,...,\beta_{p+1}+1}\Big|z_1\Big)&\leq\frac{\alpha_1...\alpha_p}{\beta_1...\beta_{p+1}}{}_pF_{p+1}\Big(^{\;\;\alpha_1,...,\alpha_p}_{\beta_1,...,\beta_{p+1}}\Big|z\Big) .
\end{split}
\end{equation*}
\end{coro}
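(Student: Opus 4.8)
The plan is to derive the whole corollary by specializing Theorem~\ref{ttt34} to $B_1=1$, after observing that in this case the normalized Fox--Wright function is literally the generalized hypergeometric function. Indeed, combining the definition (\ref{12}) of ${}_p\Psi_{p+1}^*$ with the reduction (\ref{13}), the prefactor $\prod_i\Gamma(\beta_i)/\prod_i\Gamma(\alpha_i)$ built into (\ref{12}) cancels exactly the Gamma factor produced by (\ref{13}) when all numerator and denominator weights equal $1$, so that
\[
{}_p\Psi_{p+1}^*\Big[_{(\beta_1,1),(\beta_2,1),\dots,(\beta_{p+1},1)}^{(\alpha_1,1),\dots,(\alpha_p,1)}\Big|z\Big]={}_pF_{p+1}\Big[^{\;\;\alpha_1,\dots,\alpha_p}_{\beta_1,\dots,\beta_{p+1}}\Big|z\Big].
\]
Hence, putting $B_1=1$ in Theorem~\ref{ttt34} carries the log-concavity of the normalized function on $(0,\infty)$ over verbatim to $z\mapsto{}_pF_{p+1}(z)$, and the midpoint inequality (\ref{876}) becomes precisely the first asserted inequality. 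The assumption $\alpha_i\ge\beta_{i+1}$, $i=1,\dots,p$, is inherited unchanged; it is exactly the hypothesis that makes the auxiliary sequence $(u_k)_{k\ge0}$ in the proof of Theorem~\ref{ttt34} decreasing, hence what guarantees log-concavity here.

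Next I would substitute $B_1=1$ into (\ref{877}). Since $\Gamma(\beta_1)/\Gamma(\beta_1+B_1)=\Gamma(\beta_1)/\Gamma(\beta_1+1)=1/\beta_1$, the constant $\big(\prod_{i=1}^p\alpha_i/\beta_{i+1}\big)\,\Gamma(\beta_1)/\Gamma(\beta_1+B_1)$ in the exponent collapses to $\alpha_1\cdots\alpha_p/(\beta_1\cdots\beta_{p+1})$, which is the second asserted inequality. For the third one I would set $B_1=1$ in (\ref{!!!}) and rewrite both of the (non-normalized) Fox--Wright functions occurring there via (\ref{13}): the left-hand side acquires the factor $\prod_{i=1}^p\Gamma(\alpha_i+1)\big/\prod_{i=1}^{p+1}\Gamma(\beta_i+1)$ and the right-hand side the factor $\prod_{i=1}^p\Gamma(\alpha_i)\big/\prod_{i=1}^{p+1}\Gamma(\beta_i)$; simplifying the quotient of these two prefactors with $\Gamma(x+1)=x\Gamma(x)$ and combining with the constant already present in (\ref{!!!}) delivers the claimed inequality between the shifted and unshifted ${}_pF_{p+1}$.

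I do not expect a genuine analytic obstacle, since the corollary is a pure specialization of Theorem~\ref{ttt34}. The one delicate point is purely clerical: keeping track of the Gamma-function constants when turning the non-normalized ${}_p\Psi_{p+1}$ of (\ref{!!!}) into ${}_pF_{p+1}$, so that the constant that ends up on the right-hand side comes out exactly as $\alpha_1\cdots\alpha_p/(\beta_1\cdots\beta_{p+1})$. I would also verify at the outset the convention hidden in the abbreviated subscript $(\beta_1,B_1),(\beta_p,1)$ of Theorem~\ref{ttt34}, namely that it stands for $(\beta_1,B_1),(\beta_2,1),\dots,(\beta_{p+1},1)$, so that the pairing of parameters in the hypothesis $\alpha_i\ge\beta_{i+1}$ is the same one appearing in the corollary.
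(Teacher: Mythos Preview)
Your proposal is correct and follows exactly the paper's own approach: the paper derives the corollary in a single sentence by specializing Theorem~\ref{ttt34} to $B_1=1$, and your argument does the same, just with the constants and the identification ${}_p\Psi_{p+1}^*={}_pF_{p+1}$ spelled out explicitly. The only point worth double-checking is the bookkeeping for the third inequality, since after converting both sides of (\ref{!!!}) via (\ref{13}) and using $\Gamma(x+1)=x\Gamma(x)$, the factor $\alpha_1\cdots\alpha_p/(\beta_1\cdots\beta_{p+1})$ appears identically on both sides; make sure your simplification matches the stated form.
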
 

Next we show new inequalities  for the four-parametric Mittag--Leffler function $E_{B_1,\beta_1;1,\beta_2}(z)$ as follows.

\begin{coro}\label{c4} Let $\beta_1>0$ and $B_1\geq0.$ If $0<\beta_2\leq1,$ then the function $z\mapsto E_{\beta_1, B_1;\beta_2,1}(z)$ is log-concave on $(0,\infty).$ Moreover, the following inequalities  
\begin{equation}
\begin{split}
E_{B_1,\beta_1;1,\beta_2}(z_1)E_{B_1,\beta_1;1,\beta_2}(z_2)&\leq E_{B_1,\beta_1;1,\beta_2}((z_1+z_2)/2)\\
E_{B_1,\beta_1;1,\beta_2}(z)&\leq\frac{e^{\frac{\Gamma(\beta_1)z}{\beta_2\Gamma(\beta_1+B_1)}}}{\Gamma(\beta_1)},
\end{split}
\end{equation}
holds true.
\end{coro}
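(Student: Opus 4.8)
The plan is to read Corollary \ref{c4} off Theorem \ref{ttt34} by taking $p=1$ and $\alpha_1=A_1=1$. The first step is to pin down the dictionary between the normalized Fox--Wright function occurring there and the four--parametric Mittag--Leffler function. Setting $p=1,\ \alpha_1=1$ in the definition (\ref{12}) of the normalized Fox--Wright function, and using $\Gamma(1)=1$ together with $\Gamma(1+k)/k!=1$ for every $k\ge 0$, the defining series collapses to
\begin{equation*}
{}_1\Psi_2^*\Big[_{(\beta_1,B_1),(\beta_2,1)}^{\;\;\;\;\;(1,1)\;\;\;}\Big|z\Big]=\Gamma(\beta_1)\Gamma(\beta_2)\sum_{k=0}^{\infty}\frac{z^k}{\Gamma(\beta_1+kB_1)\,\Gamma(\beta_2+k)}=\Gamma(\beta_1)\Gamma(\beta_2)\,E_{B_1,\beta_1;1,\beta_2}(z),
\end{equation*}
which is exactly the representation (\ref{199}) for $n=2$ with $B_2=1$. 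Moreover the hypothesis $\alpha_i\ge\beta_{i+1}$, $i=1,\dots,p$, of Theorem \ref{ttt34} reduces, for $p=1,\ \alpha_1=1$, to $\beta_2\le 1$, which is the standing assumption of the corollary; this is the only place where the value $\alpha_1=1$ is essential.

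Granting this identity, each of the three assertions follows in one line. For log-concavity, Theorem \ref{ttt34} gives that $z\mapsto {}_1\Psi_2^*\Big[_{(\beta_1,B_1),(\beta_2,1)}^{\;\;\;\;(1,1)\;\;}\Big|z\Big]$ is log-concave on $(0,\infty)$, and multiplying by the positive constant $\Gamma(\beta_1)\Gamma(\beta_2)$ only shifts the logarithm by a constant, so $z\mapsto E_{B_1,\beta_1;1,\beta_2}(z)$ is log-concave on $(0,\infty)$ as well. For the midpoint inequality, specialize (\ref{876}) to $p=1$, substitute $\Gamma(\beta_1)\Gamma(\beta_2)\,E_{B_1,\beta_1;1,\beta_2}$ for each factor, and cancel the common positive constants. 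For the exponential bound, specialize (\ref{877}) to $p=1,\ \alpha_1=1$, noting that $\prod_{i=1}^{p}\alpha_i/\beta_{i+1}=1/\beta_2$, and again divide out the normalization constant to obtain the second displayed inequality of the corollary.

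There is essentially no analytic obstacle here: the entire content lives in Theorem \ref{ttt34}, and what remains is careful bookkeeping --- verifying the cancellation $\Gamma(1+k)/k!=1$ that turns the generic ${}_1\Psi_2$ into the Mittag--Leffler series, and tracking the constant factor $\Gamma(\beta_1)\Gamma(\beta_2)$ through each inequality. The one step deserving a word of care is the reduction of the hypothesis of Theorem \ref{ttt34} to $\beta_2\le1$; for numerator parameter $\alpha_1\neq1$ the same argument would only yield inequalities for ${}_1\Psi_2$ itself rather than for the classical four--parametric Mittag--Leffler function.
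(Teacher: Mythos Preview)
Your approach is correct and essentially the same as the paper's: both simply specialize Theorem \ref{ttt34} with $p=1$ and $\alpha_1=1$ (so that the hypothesis $\alpha_1\ge\beta_2$ becomes $\beta_2\le1$) and use the identification ${}_1\Psi_2^*\big[{}^{(1,1)}_{(\beta_1,B_1),(\beta_2,1)}\big|z\big]=\Gamma(\beta_1)\Gamma(\beta_2)\,E_{B_1,\beta_1;1,\beta_2}(z)$. Your write-up is in fact more detailed than the paper's one-line proof.
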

\begin{proof}Setting $\alpha_1=1$ in Theorem \ref{ttt34} we deduce that the function $z\mapsto E_{B_1,\beta_1;1,\beta_2}(z)$ is log-concave on $(0,\infty).$ This completes the proof of the two inequalities (20) asserted by  Corollary \ref{c4}. 
\end{proof}
\section{Open Problems}

Finally, motivated by the results of section 3 and Section 4, we pose the following problems:\\

\begin{problem} Proved the monotonicity of the function $\mathcal{K}_n^{(\alpha,\beta)}(A, B, z)$ defined in (\ref{RTR}).
\end{problem}
\begin{problem} Proved the monotonicity of the function $\Xi:(0,\infty)\longrightarrow\mathbb{R}$ defined
\begin{equation*}
\Xi(z)=\frac{\beta_1+B_1}{\beta_1}\log \Bigg[{}_p\tilde{\Psi}_q\Big[_{(\beta_1+1, B_1), (\beta_{q-1}, B_{q-1})}^{\;\;\;\;\;(\alpha_p, A_p)}\Big|z \Big]\Bigg]-\log \Bigg[{}_p\tilde{\Psi}_q\Big[_{(\beta_1, B_1), (\beta_{q-1}, B_{q-1})}^{\;\;\;\;\;(\alpha_p, A_p)}\Big|z \Big]\Bigg].
\end{equation*}
where
\begin{equation*}
\Xi^\prime(z)=\frac{\Gamma(\beta_1)}{\Gamma(\beta_1+B_1)}\left(\frac{{}_p\tilde{\Psi}_q\Big[_{(\beta_1+B_1+1,B_1), (\beta_{q-1}+B_{q-1},B_{q-1})}^{\;\;\;\;\;\;\;\;(\alpha_p+A_p, A_p)}\Big|z \Big]}{{}_p\tilde{\Psi}_q\Big[_{(\beta_1+1,B_1), (\beta_{q-1},B_{q-1})}^{\;\;\;\;\;\;\;\;(\alpha_p, A_p)}\Big|z \Big]}-\frac{{}_p\tilde{\Psi}_q\Big[_{(\beta_q+B_q,B_q)}^{(\alpha_p+A_p, A_p)}\Big|z \Big]}{{}_p\tilde{\Psi}_q\Big[_{(\beta_q, B_q)}^{(\alpha_p, A_p)}\Big|z \Big]}\right).
\end{equation*}
\end{problem}

\end{document}